\newtheorem{theorem}{Theorem}
\newtheorem{corollary}[theorem]{Corollary}
\newtheorem{lemma}[theorem]{Lemma}
\newtheorem{proposition}[theorem]{Proposition}
\theoremstyle{definition}
\newcommand{\R}{{\mathbb R}}
\renewcommand{\(}{\left(}
\renewcommand{\)}{\right)}
\newcommand{\be}[1]{\begin{equation}\label{#1}}
\newcommand{\ee}{\end{equation}}
\title[Multiplicity results for Fermi--Dirac statistics in gravitation]{Bifurcation diagrams and multiplicity for nonlocal elliptic equations modeling gravitating systems based on Fermi--Dirac statistics}
\author[Jean Dolbeault and Robert Sta\'nczy]{}
\subjclass{Primary: 35Q85, 70K05, 85A05; Secondary: 34E15, 37N05} 
\keywords{Gravitation; Fermi--Dirac statistics; Maxwell--Boltzmann statistics; Fermi function; cumulated mass density; mass constraint; bifurcation diagrams; nonlocal elliptic equations; dynamical system; singular perturbation}
\email{dolbeaul@ceremade.dauphine.fr}
\email{stanczr@math.uni.wroc.pl}
\thanks{This work has been partially supported by the Polish Ministry of Science project N N201 418839 and ANR projects STAB and Kibord.}
\definecolor{RED}{rgb}{1,0,0}\definecolor{BLUE}{rgb}{0,0,1} 
\begin{document}
\maketitle

\centerline{\scshape Jean Dolbeault}
\medskip{\footnotesize\centerline{Ceremade (UMR CNRS no. 7534), Universit\'e Paris-Dauphine}
\centerline{Place de Lattre de Tassigny, 75775 Paris C\'edex~16, France.}}

\medskip

\centerline{\scshape Robert Sta\'nczy}
\medskip{\footnotesize\centerline{Instytut Matematyczny, Uniwersytet Wroc\l awski}
\centerline{pl. Grunwaldzki 2/4, 50-384 Wroc\l aw, Poland.}}

\bigskip

\thispagestyle{empty}

\begin{abstract} This paper is devoted to multiplicity results of solutions to nonlocal elliptic equations modeling gravitating systems. By considering the case of Fermi--Dirac statistics as a singular perturbation of Maxwell--Boltzmann statistics, we are able to produce multiplicity results. Our method is based on cumulated mass densities and a logarithmic change of coordinates that allow us to describe the set of all solutions by a non-autonomous perturbation of an autonomous dynamical system. This has interesting consequences in terms of bifurcation diagrams, which are illustrated by some numerical computations. More specifically, we study a model based on the Fermi function as well as a simplified one for which estimates are easier to establish. The main difficulty comes from the fact that the mass enters in the equation as a parameter which makes the whole problem non-local.\end{abstract}

\section{Setting of the problem}\label{Sec:Setting}

We consider a stationary solution of the drift-diffusion equation
\be{Eqn:Evol}
\rho_t=\nabla\cdot\left[\rho\(\nabla H(\rho)+\nabla\phi\)\right]
\ee
with a nonlinear diffusion based on some function $H$, coupled with the gravitational Poisson equation
\[
\Delta\phi=\rho
\]
on a bounded domain in $\mathbb{R}^3$, under appropriate boundary conditions. The simplest case, that we will call the \emph{(MB) case}, corresponds to
\[
H(\rho)=\log\rho
\]
for \emph{Maxwell--Boltzmann statistics}, yielding classical linear diffusion in~\eqref{Eqn:Evol}. In this paper we shall study the nonlinear diffusion corresponding to \emph{Fermi--Dirac statistics}, \emph{cf.}~\cite{MR2099972, MR2143357, biler2004parabolic, stanczy-evolution}. In that case the function $H$ is given by
\[
H(\rho)=f_{1/2}^{-1}(2\rho/\mu)
\]
where
\[
f_{1/2}(z):=\int_0^\infty\frac{\sqrt x}{1+\exp(x-z)}\;dx
\]
is a Fermi function. See Appendix~\ref{Appendix} for more details. We shall refer to this case as the (FD) case. In order to deal with more explicit estimates, we shall also consider a \emph{simplified Fermi--Dirac} model. This (sFD) case captures the asymptotic behavior of (FD) and is given by
\[
H(\rho)=\log\rho+\frac32\,\eta\,\rho^{2/3}\;.
\]
Connection between (FD) and (sFD) cases can be established when the positive parameters $\eta$ and $\mu$ are such that
\be{Eqn:EtaMu}
\mu^2\,\eta^3=\frac83\;.
\ee
We again refer to Appendix~\ref{Appendix} for further details. The (FD) model was introduced by P.-\,H.~Chavanis \emph{et al.}~in~\cite{MR2092680, CSR} in the context of astrophysical models of gaseous stars (also see \cite{C} for a general review of the subject). It can be seen as a perturbation of the (MB) model, or linear diffusion model, and we shall prove that some features of the set of the stationary solutions are shared by the linear (MB) model and the nonlinear models (FD) and (sFD), if we take $\eta>0$ sufficiently small (or $\mu>0$ sufficiently large).

We have several reasons to consider the (sFD) case: it has all qualitative features of the (FD) model, equivalents in the asymptotic regimes are much easier to control and numerically it avoids painful computations of Fermi functions, without loosing anything at the level of the mathematical results (qualitative behavior of the solutions) and their physical interpretation.

\medskip In order to use the \emph{cumulated mass} technique, we shall assume that the domain under consideration is the unit ball $B:=\{x\in\R^3\,:\,|x|<1\}$ and that the above equations are respectively supplemented with no-flux boundary conditions
\[
\big(\nabla H(\rho)+\nabla\phi\big)\cdot n=0\quad\mbox{on}\quad\partial B
\]
for the mass density (here $n(x)=x/|x|$ for any $x\in\partial B$) and homogenous Dirichlet boundary conditions for the potential
\[
\phi=0\quad\mbox{on}\quad\partial B\;.
\]
We are interested in the stationary problem with a fixed \emph{mass constraint}
\[
\int_B\rho\;dx=M
\]
that can be solved by
\[
\rho=F\(\lambda-\phi\)
\]
for some appropriate Lagrange multiplier $\lambda$. Since $F$ in all considered cases is monotone increasing, the multiplier $\lambda$ is uniquely defined. Here $F=H^{-1}$ is the inverse of $H$, that is,
\[
F(z)=\frac\mu2\,f_{1/2}(z)
\]
in case (FD) of Fermi--Dirac statistics, and
\[
F(z)=e^z
\]
in case of (MB) Maxwell--Boltzmann statistics. In the (MB) case, the Lagrange multiplier is explicitly given by
\[
e^{\lambda}=\frac{M}{\int_B e^{-\phi}\;dx}\;.
\]
The function $F$ has no simple expression in the (sFD) case. In all cases the stationary problem that we have to solve can be formulated as
\be{Eqn:Fermi--Dirac}
\Delta\phi=F\(\lambda-\phi\)\;\;\mbox{on}\;\; B\;,\quad\phi=0\;\mbox{on}\;\;\partial B\;,\quad\int_BF\(\lambda-\phi\)\;dx=M\;.
\ee

\medskip Let us start with a result in the (MB) case, which is easy to visualize on the bifurcation diagram expressing the dependence of the supremum norm of $\phi$ on the mass parameter. This result is a simple reformulation of a former result from~\cite{BDEMN}. The corresponding diagram exhibits a spiraling structure that can be seen in Fig.~\ref{Fig:Max} (left) and accounts for the multiplicity of solutions which is reflected by the oscillating behavior of the branch in the bifurcation diagram: see Fig.~\ref{Fig:Max} (right).
\begin{proposition}\label{Prop:MB} There exist two sequences $(M_*^n)_{n\ge1}$ and $(M^*_n)_{n\ge1}$ which are ordered and monotone: $M_*^1<M_*^2<\ldots<M^*_2<M^*_1$, such that for any $M\in(M_*^n,M_*^{n+1})\cup(M^*_{n+1},M^*_n)$ there exist at least $2n$ solutions of \eqref{Eqn:Fermi--Dirac} for $F(z)=\exp z$, that is, in the Maxwell--Boltzmann {\rm (MB)} case.\end{proposition}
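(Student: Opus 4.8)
The plan is to exploit the radial symmetry of the problem, reducing \eqref{Eqn:Fermi--Dirac} in the (MB) case to an ODE that can be transformed into an autonomous dynamical system whose phase portrait is explicitly understood. First I would observe that on the unit ball $B$, by uniqueness and symmetry of solutions to the Poisson problem with a radial right-hand side, any solution $\phi$ is radial and decreasing in $r=|x|$. Writing the equation in radial coordinates gives
\[
\phi''+\frac2r\,\phi'=e^{\lambda-\phi}\;,\qquad \phi'(0)=0\,,\quad\phi(1)=0\;.
\]
Since the Lagrange multiplier appears only through the shift $\lambda-\phi$, I would set $u:=\lambda-\phi$, so that $-\Delta u=e^u$ and the mass constraint becomes $M=\int_B e^u\,dx$. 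The value $u(0)=\lambda-\phi(0)$ is a free initial parameter, and each choice determines a unique solution of the initial value problem; the boundary condition $\phi(1)=0$ then fixes $\lambda$ in terms of that parameter.

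The key step is the logarithmic change of variables — the Emden--Fowler substitution — which is precisely the tool announced in the abstract. Setting $t=\log r$ and introducing suitably rescaled variables (for instance $v(t)=u(r)+2\log r + \text{const}$, together with its derivative), the radial equation $u''+\tfrac2r u'+e^u=0$ becomes an \emph{autonomous} planar system in the new time $t$. The resulting phase plane has a single interior critical point, a spiral focus, and the trajectories spiral around it as $t\to-\infty$, i.e. as $r\to0^+$. The classical result from~\cite{BDEMN} that Proposition~\ref{Prop:MB} reformulates provides exactly this spiraling structure. I would then express the two physically relevant quantities — the mass $M$ and the supremum norm $\|\phi\|_\infty=\phi(0)=u(0)-\lambda$ — as functions along the trajectory parametrized by the initial value $u(0)$, so that the bifurcation curve $M\mapsto\|\phi\|_\infty$ is the image of this spiral under the evaluation maps.

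The counting argument is then the heart of the matter. As the initial parameter $u(0)$ ranges over $(-\infty,+\infty)$, the corresponding point on the spiral winds infinitely many times around the focus. Because the focus is approached asymptotically, the mass $M(u(0))$ oscillates and converges to a limiting value $M_\infty$, producing a sequence of local maxima and minima. I would label the values of $M$ at successive turning points: the local minima give the increasing sequence $(M_*^n)_{n\ge1}$ and the local maxima give the decreasing sequence $(M^*_n)_{n\ge1}$, both converging monotonically to $M_\infty$ and ordered as $M_*^1<M_*^2<\cdots<M_\infty<\cdots<M^*_2<M^*_1$. For a mass $M$ lying in $(M_*^n,M_*^{n+1})\cup(M^*_{n+1},M^*_n)$, the horizontal line at height $M$ crosses the spiral in at least $2n$ points by an elementary monotonicity-and-intermediate-value count on each successive arc between turning points, and each crossing corresponds to a distinct solution of \eqref{Eqn:Fermi--Dirac}.

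The hard part will be establishing rigorously that the oscillation of $M$ along the trajectory is genuinely monotone between consecutive turning points and that the turning-point values are strictly ordered and converge to $M_\infty$ — equivalently, that the spiral truly winds without the mass function degenerating. Since the proposition is stated as a reformulation of~\cite{BDEMN}, I expect the cleanest route is to quote the spiral structure from that reference and devote the argument to the bookkeeping that translates winding number into solution count, rather than re-deriving the phase-plane analysis from scratch.
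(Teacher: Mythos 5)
Your proposal is correct and is essentially the paper's own argument: the paper's change of variables $s=\log r$, $x=\zeta(r)/r$, $y=\zeta'(r)$ is just a diffeomorphic relabeling of your Emden--Fowler variables (with the constant in $v(t)=u(r)+2\log r$ set to zero, one has $(x,y)=(2-v',e^{v})$), and both arguments quote the spiral structure from \cite{BDEMN}, parametrize all solutions by translation along the unique distinguished orbit (equivalently by the central density $u(0)$), and count intersections of the level $M$ with the oscillating mass function between successive turning points. One small correction: the orbit spirals into the focus as $t\to+\infty$, not as $t\to-\infty$ (as $t\to-\infty$, i.e.\ $r\to0^+$, the trajectory of a regular solution leaves the spiral and tends to the other equilibrium, $(0,0)$ in the paper's coordinates); this slip is immaterial, since your counting argument correctly rests on the winding of the boundary-evaluation point as $u(0)\to+\infty$, which is the same phenomenon.
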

\begin{proof}[Sketch of the proof] We start by reducing \eqref{Eqn:Fermi--Dirac}, written with $F(z)=e^z$ to the autonomous, dynamical system
\be{Syst:Auton}\left\{\begin{array}{l}
x'=y-\,x\\[6pt]
y'=(2-\,x)\,y
\end{array}\right.\ee
and look for the branch of solutions $s\mapsto(x(s),y(s))$ such that $\lim_{s\to-\infty}(x(s),y(s))\!=(0,0)$, where $x$ and $y$ are defined, consistently with notation in (5) and (7) to be specified later, as
\[
x(\log r)=\frac1r\int_0^rs^2\,e^{\lambda-\phi(s)}\;ds\quad\mbox{and}\quad y(\log r)=r^2\,e^{\lambda-\phi(r)}\quad\forall\,r>0\;.
\]
Then there exists a unique heteroclinic orbit joining the points $(0,0)$ and $(2,2)$ as shown in Fig.~\ref{Fig:Max}. This heteroclinic orbit can be used to parametrize all solutions to~\eqref{Eqn:Fermi--Dirac}.
\begin{figure}[ht]
\label{Fig:Max}
\begin{center}
\includegraphics[height=3.5cm]{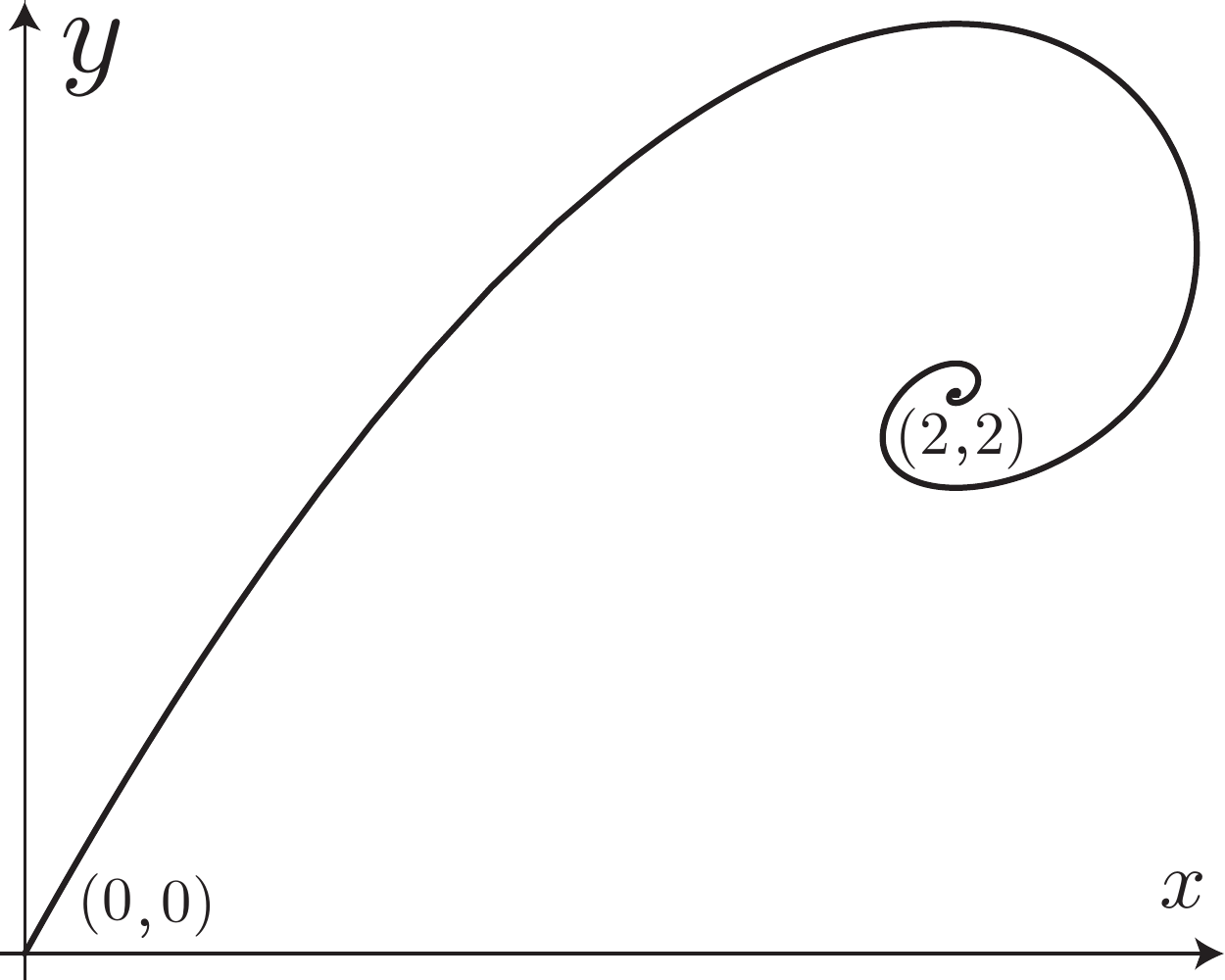} \hspace*{30pt}
\includegraphics[height=3.5cm]{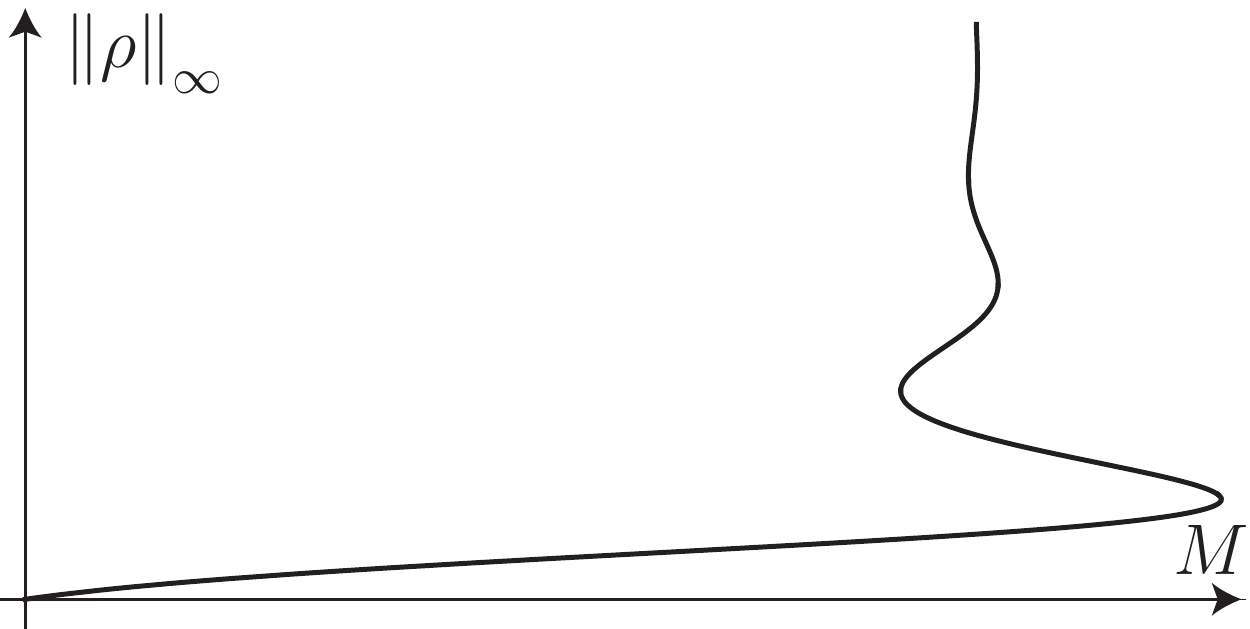}
\end{center}
\caption{\it\small Left: the heteroclinic orbit joining the points $(0,0)$ and $(2,2)$ in the {\rm (MB)} case. Right: the corresponding bifurcation diagram.}
\end{figure}

Since $s\mapsto(x(s+s_0),y(s+s_0))$ is also a solution to \eqref{Syst:Auton}, we may choose $s_0\in\R$ such that $4 \pi\,x(s_0)=M$ at least if $M$ is in the admissible range $4\pi\,x(\R)$ and thus get a solution with $e^\lambda=y(s_0)$. The parameters $M_*^n$ and $M^*_n$ correspond to the lower and upper values of $M$ at which the bifurcation curve turns.\end{proof}

The proof of Proposition~\ref{Prop:MB} is standard in the theory of gravitating systems. The reader is invited to check that all solutions are indeed described by the scheme and is invited to refer to \cite{BDEMN} for more details. Numerically, the scheme allows one to compute all solutions by solving a simple ODE problem. It is also at the core of our approach for the nonlinear case and this is what we are now going to explain.

The main result of this paper is the following theorem. It is a multiplicity result for the Fermi--Dirac model (FD) and its simplified version (sFD) stating that for some values of the mass parameter we have at least the same number of solutions as for the Maxwell--Boltzmann case, if the parameter $\eta>0$ is small enough. We consider the two sequences $(M_*^n)_{n\ge1}$ and $(M^*_n)_{n\ge1}$ that have been defined in Proposition~\ref{Prop:MB} in the (MB) case.
\begin{theorem}\label{Thm:Main} For any $M\in(M_*^n,M_*^{n+1})\cup(M^*_{n+1},M^*_n)$, $n\in \mathbb{N}$, if $\eta>0$ is sufficiently small, there are at least $2n$ solutions of \eqref{Eqn:Fermi--Dirac} in the {\rm (FD)} and {\rm (sFD)} cases.\end{theorem}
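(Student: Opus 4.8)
The plan is to mimic the reduction behind Proposition~\ref{Prop:MB}, carrying along the correction produced by $\eta$. Working with the cumulated mass and the logarithmic variable $s=\log r$, I would first rewrite the radial form of \eqref{Eqn:Fermi--Dirac} in the (sFD) case, where $\rho=F(\lambda-\phi)$ solves $\log\rho+\frac32\,\eta\,\rho^{2/3}=\lambda-\phi$, as the \emph{non-autonomous} system
\be{Syst:Pert}\left\{\begin{array}{l}
x'=y-x\\[6pt]
y'=(2-x)\,y+\eta\,g(x,y,s)
\end{array}\right.\ee
with the same variables $x(\log r)=\frac1r\int_0^r\sigma^2\rho\,d\sigma$ and $y(\log r)=r^2\rho(r)$ as in the sketch above, so that $\rho=y\,e^{-2s}$. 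A short computation, differentiating the constitutive relation, gives $g(x,y,s)=x\,y\,\rho^{2/3}/(1+\eta\,\rho^{2/3})$; what matters is that $g$ is smooth and, on every compact subset of the $(x,y,s)$-space, bounded uniformly for small $\eta$. For $\eta=0$ the system is exactly \eqref{Syst:Auton}, so \eqref{Syst:Pert} is a genuine perturbation of the (MB) dynamics, singular only in the limits $s\to\pm\infty$.

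Second, I would select, exactly as in the (MB) case, the orbit $s\mapsto(x(s;\eta),y(s;\eta))$ that emanates from the fixed point $(0,0)$ as $s\to-\infty$, which corresponds to the regular solution at the center of the ball. Checking that this selection survives the perturbation requires an estimate near $s=-\infty$: there $y\sim\rho(0)\,e^{2s}$, hence $\rho^{2/3}y\sim e^{2s}\to0$, so the perturbation decays and the unstable-manifold branch depends continuously on $\eta$. By continuous dependence of solutions of \eqref{Syst:Pert} on the parameter, the resulting bifurcation map $s_0\mapsto 4\pi\,x(s_0;\eta)$ (with multiplier $e^\lambda=y(s_0;\eta)$) converges, uniformly on every compact $s_0$-interval, to the (MB) bifurcation map $s_0\mapsto4\pi\,x(s_0;0)$ as $\eta\to0$.

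Third comes the persistence argument. By construction in Proposition~\ref{Prop:MB}, the turning values $M_*^n$ and $M^*_n$ are precisely the critical values of the (MB) map $s_0\mapsto4\pi\,x(s_0;0)$; therefore, for $M$ lying \emph{strictly} inside $(M_*^n,M_*^{n+1})\cup(M^*_{n+1},M^*_n)$, the equation $4\pi\,x(s_0;0)=M$ has at least $2n$ solutions, all of them transversal (nonzero derivative of the map). These $2n$ crossings occur on a fixed compact $s_0$-interval that stays away from the accumulation of the spiral at $(2,2)$. On that compact interval the uniform convergence of Step~2 together with transversality lets me apply the implicit function theorem (or simply the intermediate value theorem to the sign changes) to conclude that each of the $2n$ transversal solutions persists for all $\eta$ small enough, producing at least $2n$ solutions of \eqref{Eqn:Fermi--Dirac} in the (sFD) case.

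The main obstacle is exactly the \emph{singular} character of the perturbation: the full heteroclinic orbit spirals infinitely into $(2,2)$, so no uniformly small perturbation can control all of its oscillations at once; this is why the statement is ``at least $2n$'' and why $M$ must be held strictly inside a given interval, so that only the finitely many crossings living on a compact portion of the orbit are involved. The remaining work is to make Steps~1--2 uniform near $s\to-\infty$ (done above) and to rule out loss of crossings on the compact window, both of which reduce to elementary ODE estimates. Finally, the (FD) case follows by the same scheme: using the asymptotic relation \eqref{Eqn:EtaMu} together with the Appendix expansion of $f_{1/2}$, the (FD) reduction produces a system of the form \eqref{Syst:Pert} whose perturbation differs from the (sFD) one by a term that is again $o(1)$ as $\eta\to0$ (equivalently $\mu\to\infty$) on the relevant compact set, so the identical persistence argument applies.
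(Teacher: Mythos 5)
Your overall strategy coincides with the paper's: treat (sFD)/(FD) as a perturbation of the (MB) dynamical system, prove convergence of the perturbed branch, and let transversal crossings of the (MB) bifurcation map persist. Your computation of the perturbation term and of its $e^{4s}$ decay along the orbit is also correct. However, there is a genuine gap in Steps 2--3: you parametrize the set of solutions by the time shift $s_0$ along a \emph{single} orbit and take $s_0\mapsto 4\pi\,x(s_0;\eta)$, with multiplier $e^{\lambda}=y(s_0;\eta)$, as the bifurcation map. This is legitimate only in the autonomous case. For \eqref{Syst:Auton}, $(x(\cdot+s_0),y(\cdot+s_0))$ is again a solution, so evaluating the heteroclinic at $s_0$ gives the mass of a genuine solution on the unit ball; that is the mechanism of Proposition~\ref{Prop:MB}. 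For $\eta>0$ the system \eqref{Eqn:DynamicalSystem-a}--\eqref{Eqn:DynamicalSystem-b} is non-autonomous and time translation is not a symmetry: in the (sFD) case one checks directly that
\[
e^{2(s+s_0)}\,R_\eta\big(e^{-2(s+s_0)}\,y\big)=e^{2s}\,R_{\eta'}\big(e^{-2s}\,y\big)\quad\mbox{with}\quad\eta'=\eta\,e^{-4s_0/3}\;,
\]
so the shifted orbit solves the system with parameter $\eta\,e^{-4s_0/3}$ instead of $\eta$. Hence, for $s_0\neq0$, the number $4\pi\,x(s_0;\eta)$ is the mass of a solution of a \emph{different} equation, and your $2n$ persistent crossings yield one solution for each of $2n$ distinct values of the parameter, not $2n$ solutions of \eqref{Eqn:Fermi--Dirac} for one fixed small $\eta$. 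As written, the argument does not prove the theorem.

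The repair is exactly the paper's route: for fixed $\eta>0$ the branch must be parametrized by the central density $\rho=\lim_{s\to-\infty}e^{-2s}y_\eta(s)$, i.e., by condition \eqref{Eqn:Limits}, the mass always being read at $s=0$, so that the bifurcation map is $\rho\mapsto4\pi\,x_\eta(0)$. One then needs $x_\eta(0)\to x_0(0)$ \emph{uniformly in} $\rho\le\rho_0$, and since the comparison is made on the whole half-line $(-\infty,0]$, continuous dependence on compact $s$-intervals does not suffice; this is the content of Lemma~\ref{Lem:Continuity}, proved by a Gronwall argument on the weighted quantities $\sup_{s\le t}e^{-2s}|x_\eta-x_0|$ and $\sup_{s\le t}e^{-2s}|y_\eta-y_0|$, using the bound $0\le z-R_\eta(z)\le C\,\eta\,z^{5/3}$ of Lemma~\ref{Lem:Equivalents} (valid for both (sFD) and (FD), which is what makes your final remark about the (FD) case correct). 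Your decay estimate near $s=-\infty$ is precisely the right ingredient for that lemma, and your transversality/intermediate-value argument then applies verbatim to the map $\rho\mapsto4\pi\,x_\eta(0)$ on a compact window $[0,\rho_0]$ containing the $2n$ crossings of the (MB) diagram.
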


In other words, the model corresponding to the Fermi--Dirac statistics or its simplified version at least partially inherits the spiraling structure of the model corresponding to the Maxwell--Boltzmann. See Figs.~\ref{Fig:NonAutonmousPhase} (left) and \ref{Fig:mvs0}. From the numerical computations it can easily be conjectured that a more precise description of the solution set can be achieved, with exact multiplicity. Hence it seems that for $M\in(M_*^n,M_*^{n+1})$ and for $M\in(M^*_{n+1},M^*_n)$ the exact multiplicity are respectively $4n+1$ and $4n-1$, in the asymptotic regime corresponding to $\eta\to0_+$. However, such a conjecture requires a by far more delicate analysis than the one we have done in this paper and is therefore still open. This can be summarized in the bifurcation diagrams for the simplified Fermi--Dirac and various values of $\eta$ approaching $0$. See Figs.~\ref{Fig:NonAutonmousPhase} (right) and~\ref{Fig:mvs}. More details on bifurcation diagrams and further numerical results will be given in Section~\ref{Sec:Bifurcation}.

\begin{figure}[ht]
\label{Fig:NonAutonmousPhase}
\begin{center}
\includegraphics[height=5cm]{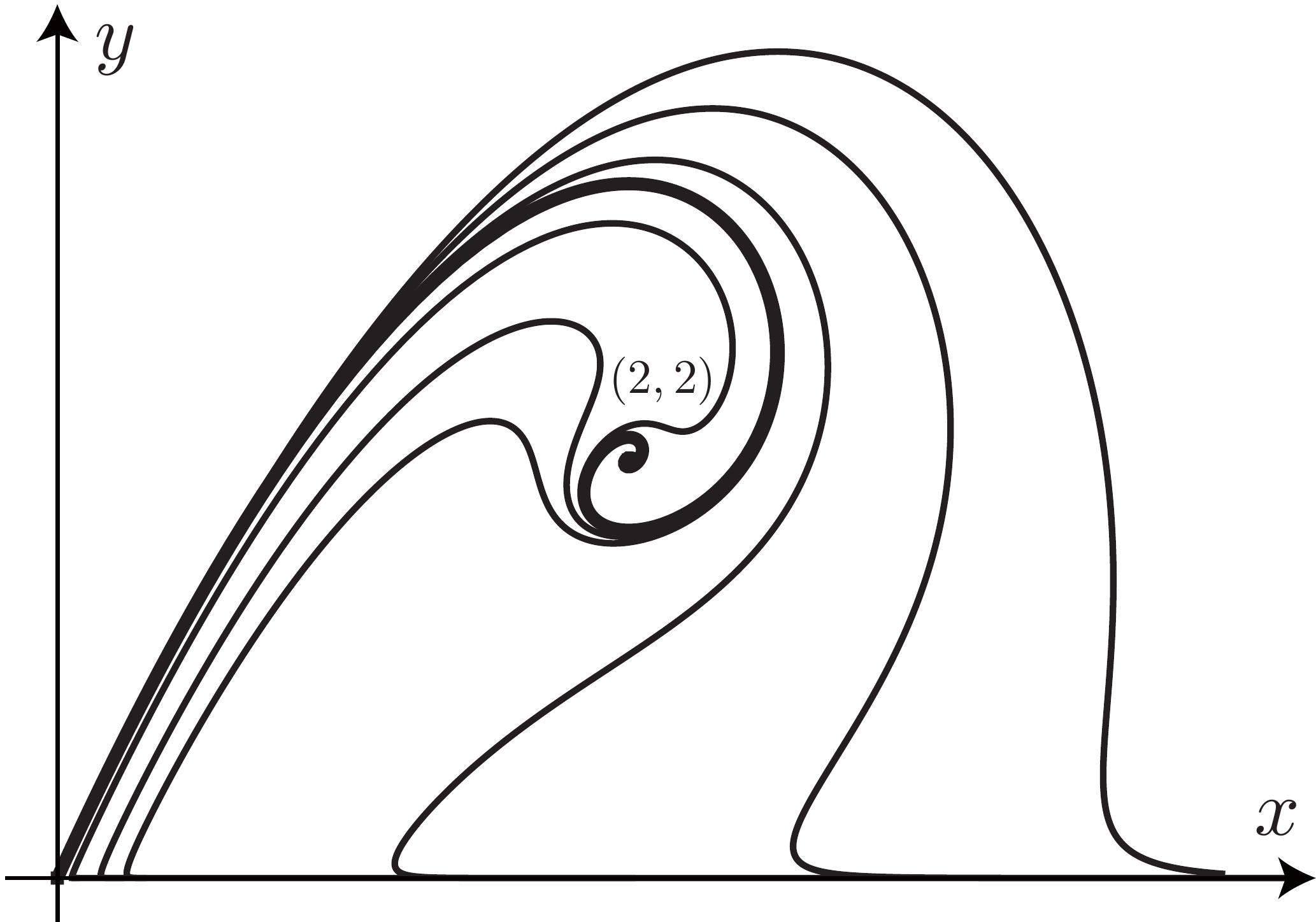}
\end{center}
\caption{\it\small Left: the heteroclinic orbit ($\eta=0$) joining the points $(0,0)$ and $(2,2)$ in the {\rm (MB)} case corresponds to the bold curve. Orbits corresponding to various positive values of $\eta$: $0.05$, $0.03$, $0.01$, $0.002$, $0.001$ and $0.0005$ in the {\rm (sFD)} case are approaching the one corresponding to the $\eta=0$ case, up to the singular point of the spiral.\label{Fig:mvs0}}
\end{figure}

\begin{figure}[ht]
\includegraphics[width=8cm]{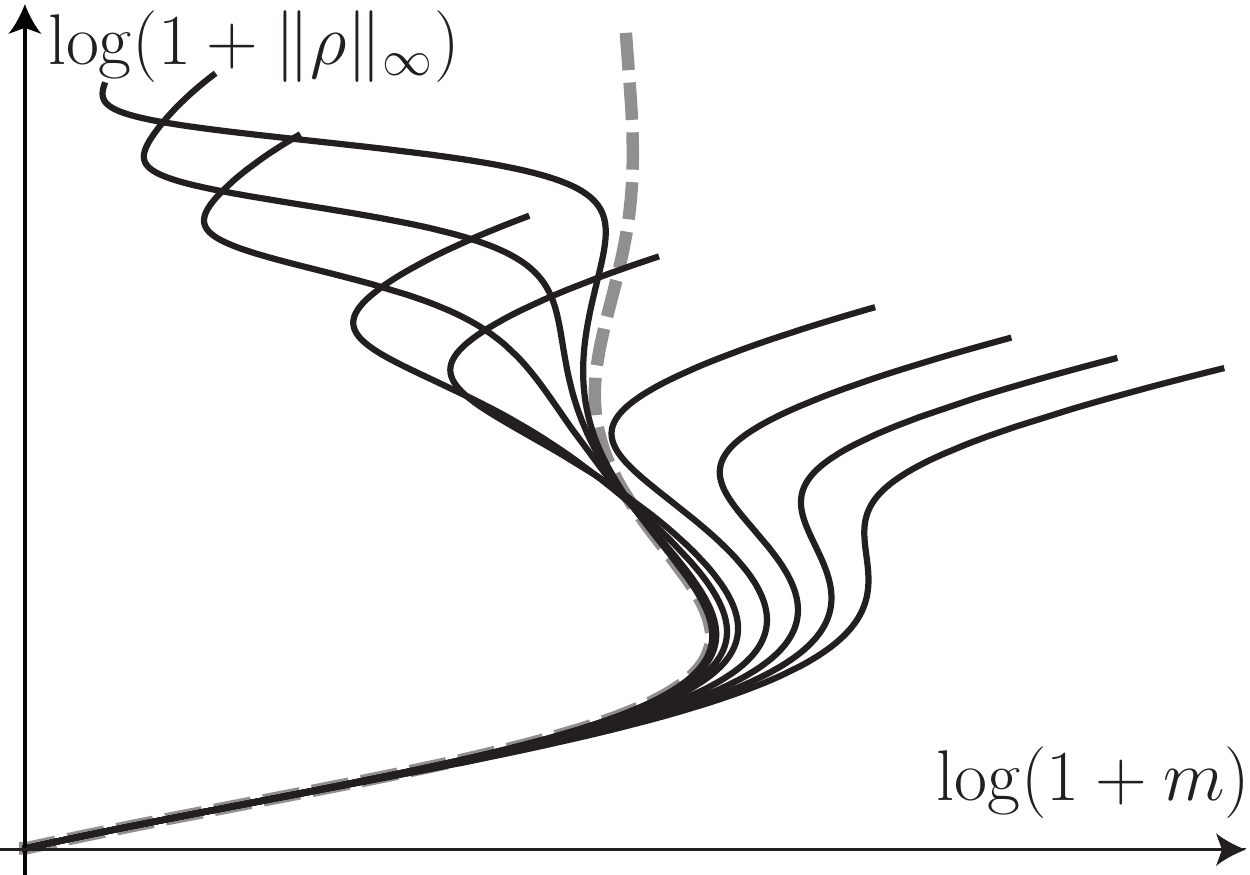}
\caption{\it\small Bifurcation diagrams of the solutions in the {\rm (sFD)} case, in the $(\log(1+m),\log(1+\|\rho\|_\infty))$ representation. Here $m=\frac M{4\pi}$is the normalized mass. Plain curves correspond to $\eta=0.05$, $0.04$, $0.03$, $0.02$, $0.01$, $0.002$, $0.001$, $0.0005$ while the dashed line corresponds to the limit~{\rm (MB)} case ($\eta=0$) and oscillates around a limiting value corresponding to $m=2$, \emph{i.e.}, $M=8\pi$.\label{Fig:mvs}}
\end{figure}

For the convenience of the reader and to avoid further repetitions, let us synthetize our notation:
\begin{enumerate}
\item[(MB)] \emph{Maxwell--Boltzmann statistics:}
\[
F(z)=e^z\;,\quad H(z)=\log z\quad\mbox{and}\quad R(z)=z\;,
\]
\item[(sFD)] \emph{simplified Fermi--Dirac statistics:}
\[
F(z)=H^{-1}(z)\;,\quad H(z)=\log z+\frac32\,\eta\,z^{2/3}\quad\mbox{and}\quad R(z)=\frac1{\frac1z+\frac\eta{z^{1/3}}}\;,
\]
\item[(FD)] \emph{Fermi--Dirac statistics:}
\[
F(z)=\frac\mu2\,f_{1/2}(z)\;,\quad H(z)=f_{1/2}^{-1}(2z/\mu)\quad\mbox{and}\quad R(z)=\frac\mu4\,f_{-1/2}\( f_{1/2}^{-1}(2z/\mu)\)\;,
\]
where $\mu$ is given in terms of $\eta$ by~\eqref{Eqn:EtaMu}, $f_{-1/2}$ has already been defined and $f_{-1/2}(z)=\int_0^\infty \frac{x^{-1/2}}{1+\exp(x-z)}dx$ is another Fermi function (\emph{cf.}~Appendix).
\end{enumerate}
Here the function $R$, which will be useful in the computations, is defined by $R=1/H'$, and we shall also consider $P$ such that $P'(z)=z\,H'(z)$. Unless specified otherwise, 
$\eta>0$ implicitly means that (FD) and (sFD) are under consideration, while statements corresponding to $\eta=0$ apply to (MB). However, let us emphasize that (MB) is a singular limit as $\eta\to0_+$ of the cases corresponding to $\eta>0$. It is the main purpose of this paper to clarify this issue. For simplicity, we shall omit to mention the dependence on $\eta$ and write $R=R_\eta$ only when emphasizing the dependence on~$\eta$.

As we shall see, part of the branches depicted in the bifurcation diagrams converges as $\eta\to0_+$ to the branches of (MB). For small values of the mass parameter both models share the same existence and uniqueness property of solution as was proved in \cite{DOLBEAULT:2009:HAL-00349574:2} by the generalized Rellich-Poho\-\v{z}aev method. However, there is a major difference in the asymptotic behavior when comparing the Fermi--Dirac and the Maxwell--Boltzmann cases. In the (MB) framework, there is a critical mass above which no stationary solution exists. In the (FD) and (sFD) cases, no such critical mass appears as was shown in~\cite{MR2136979}, and for any mass there exists a solution. Numerically, it is quite clear how the spiral of Fig.~\ref{Fig:Max} gets regularized in Fig.~\ref{Fig:mvs0}.

It is also rather clear that our results can be extended at almost no cost to a large class of nonlinearities $F$ depending on a parameter $\eta$, with appropriate properties, that converge to the exponential function in the limit as $\eta\to0_+$. From the physics point of view, however, it is the (FD) case that makes sense and this is why we have chosen not to cover the most general case but only the (FD) and (sFD) nonlinearities.

\medskip Before entering in the details, let us give a brief account of the literature on the subject. The reader is invited to refer to the references given in the quoted papers, especially for historical developments of the subject. In the three-dimensional (FD) case, it has been shown in \cite{MR2136979} that there exists a global branch of solutions of \eqref {Eqn:Fermi--Dirac} with arbitrary masses $M>0$. This result is achieved by a variational approach as in \cite{MR2143357,MR2136979,Wol} and by topological arguments (also see \cite{MR2506790}) based on the fixed point
\[\label{Eqn:FixedPoint}
\phi =\Delta^{-1}F(\lambda_\phi-\phi)\;,
\]
where $\lambda_\phi\le F^{-1}(M)$ is chosen in order to satisfy the mass constraint in~\eqref{Eqn:Fermi--Dirac}. Still in the framework of Fermi--Dirac statistics, see \cite{MR2092680,MR2338354,DOLBEAULT:2009:HAL-00349574:2,MR2548877,MR2506790} for stationary models and \cite{MR2675439,MR2305349,MR2295189} for the corresponding equations of evolution. In the (MB) case, the problem reduces to the classical Gelfand problem, \emph{cf.}~\cite{MR1305221,MR0340701}. A related family of problems can be considered when the $\rho$ factor in \eqref{Eqn:Evol} is replaced by $\rho\,P'(\rho)$, where the pressure function $P$ is given in terms of $H$ by $P'(\rho)=\rho\,H'(\rho)$. Such equations are motivated by~\cite{MR2092680,CSR} and have been mathematically studied in~\cite{MR2099972, MR2143357, biler2004parabolic, stanczy-evolution}.

\medskip This paper is organized as follows. In Section~\ref{Sec:Dynamical}, we will generalize the change of variables that has been done in the proof of Proposition~\ref{Prop:MB} for $\eta=0$ to the case $\eta\ge0$ and get a non-autonomous dynamical system. In the next section we shall focus our attention on some \emph{a priori} estimates for the dynamical system. Section~\ref{Sec:Mass} is devoted to the study of the dependence of the supremum norm of the density on the mass parameter. Finally, in Section~\ref{Sec:Continuity} we establish some continuity results and prove the existence of multiple solutions (Theorem~\ref{Thm:Main}) as suggested by the bifurcation diagrams. Detailed numerical results and plots of the bifurcation diagrams in the (sFD) case are presented in Section~\ref{Sec:Bifurcation}, which also contains plots of quantities of physical interest like the free energy. Technical results concerning Fermi and related functions can be found in Appendix~\ref{Appendix}.

\section{The dynamical system}\label{Sec:Dynamical}

As a preliminary observation, we recall that, according to the symmetry result of Gidas, Ni and Nirenberg in \cite[Theorem~1]{MR544879}, any solution~$\phi$ to~\eqref{Eqn:Fermi--Dirac} is radially symmetric when $F$ corresponds to (FD), (sFD) or (MB). In this section we shall prove that all radial solutions to \eqref{Eqn:Fermi--Dirac} can be parametrized by the solutions to some dynamical system. With a standard abuse of notation, we shall write a radial function of $x$ as a function of $|x|$. From $\rho=F\(\lambda-\phi\)$, \emph{i.e.}, $\lambda-\phi=H(\rho)$, we deduce that
\[
\phi'=-\,H'(\rho)\,\rho'\;.
\]
On the other hand, if we integrate the Poisson equation
\[
r^{-2}\,\(r^2\,\phi'\)'=\rho
\]
once, then by smoothness of $\phi$ we know that $\lim_{r\to 0^+} r^2\,\phi'(r)=0$ and thus get
\[
r^2\,\phi'=\zeta(r)
\]
where
\be{Eqn:zeta}
\zeta(r):=\frac1{4\pi}\int_{B(0,r)}\rho\;dx=\int_0^r z^2\,\rho(z)\;dz\;.
\ee
Altogether, we have obtained that
\[
\rho'+\frac1{H'(\rho)}\,\frac{\zeta(r)}{r^2}=0
\]
and, using $\rho=r^{-2}\,\zeta'(r)$ and $R(z):=1/H'(z)$, we find that
\be{Eqn:Poisson}
\(r^{-2}\,\zeta'\)'+r^{-2}\,R\(r^{-2}\,\zeta'\)\,\zeta=0\;.
\ee
Following the computations of \cite{BDEMN, Stanczy07}, we introduce the following change of variables
\be{Eqn:change}\left\{\begin{array}{l}
r=e^s\\[6pt]
x(s)=\zeta(r)/r\\[6pt]
y(s)=\zeta'(r)
\end{array}\right.\ee
and find by differentiation of $x(s)=e^{-s}\,\zeta(e^s)$ that
\[
x'=y-\,x
\]
while \eqref{Eqn:Poisson} can be rewritten as
\[
(e^{-2s}\,y)'\,e^{-s}+e^{-s}\,R\(e^{-2s}\,y\)\,x=0\;,
\]
thus showing that the dynamical system $s\mapsto(x(s),y(s))$ obeys the equations
\begin{eqnarray}
&&x'=y-\,x\;,\label{Eqn:DynamicalSystem-a}\\
&&y'=2\,y-\,e^{2s}\,R\(e^{-2s}\,y\)\,x\;.\label{Eqn:DynamicalSystem-b}
\end{eqnarray}

If $R(z)=z$, which corresponds to (MB), we recover the autonomous dynamical system \eqref{Syst:Auton} that was obtained in the proof of Proposition~\ref{Prop:MB}. For (FD) and (sFD), the main difficulty is due to the fact that \eqref{Eqn:DynamicalSystem-a}--\eqref{Eqn:DynamicalSystem-b} is not autonomous. However, our strategy is built on the observation that $R$ converges to the identity as $\eta\to0_+$.

We observe that \eqref{Eqn:DynamicalSystem-a}--\eqref{Eqn:DynamicalSystem-b} is related to \eqref{Eqn:Fermi--Dirac} by
\be{Eqn:Lim}
M=4\pi\,x(0)\quad\mbox{and}\quad\rho(0)=\|\rho\|_\infty=\lim_{s\to-\infty}e^{-2s}\,y(s)\;,
\ee
where by $\|\rho\|_\infty$ we denote the supremum norm $\|\rho\|_{L^\infty(B)}$. By the change of variables~\eqref{Eqn:change}, $0<r\le 1$ means that $-\infty<s\le 0$ and $(x,y)$ has to be confined in the \emph{positive quadrant} (upper right quadrant), \emph{i.e.}, $(x,y)\in[0,\infty)^2$. Altogether we can summarize our observations in the following result.
\begin{lemma}\label{Lem:Parametrization} The solutions $(x,y)$ of the dynamical system \eqref{Eqn:DynamicalSystem-a}--\eqref{Eqn:DynamicalSystem-b} defined on the interval $(-\infty,0]$ and satisfying conditions~\eqref{Eqn:Lim} are related to the solutions $\rho=F\(\lambda-\phi\)$ of \eqref{Eqn:Fermi--Dirac} by the changes of variables defined by \eqref{Eqn:zeta}~and~\eqref{Eqn:change}.\end{lemma}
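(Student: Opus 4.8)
The plan is to show that the changes of variables \eqref{Eqn:zeta} and \eqref{Eqn:change} establish a one-to-one correspondence between the radial solutions of \eqref{Eqn:Fermi--Dirac} and the solutions $(x,y)$ of \eqref{Eqn:DynamicalSystem-a}--\eqref{Eqn:DynamicalSystem-b} on $(-\infty,0]$ subject to \eqref{Eqn:Lim}; restricting to radial solutions costs nothing, by the Gidas--Ni--Nirenberg symmetry result \cite{MR544879}. The forward implication has essentially been carried out in the computation preceding the statement: from $\rho=F(\lambda-\phi)$ one derives \eqref{Eqn:Poisson}, and substituting \eqref{Eqn:change} produces \eqref{Eqn:DynamicalSystem-a}--\eqref{Eqn:DynamicalSystem-b}. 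What I would add for this direction is only the sign and limit bookkeeping: since $\rho\ge0$ one has $\zeta\ge0$ and $\zeta'=r^2\rho\ge0$, so $(x,y)$ remains in the positive quadrant; evaluating at $r=1$ gives $x(0)=\zeta(1)=M/(4\pi)$; and $e^{-2s}y(s)=r^{-2}\zeta'(r)=\rho(r)$ yields $\lim_{s\to-\infty}e^{-2s}y(s)=\rho(0)$, the identification $\rho(0)=\|\rho\|_\infty$ following from $\phi'=\zeta/r^2\ge0$, which makes $\rho=F(\lambda-\phi)$ nonincreasing in $r$.

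For the converse I would reconstruct $(\rho,\phi,\lambda)$ from a given orbit. Setting $\zeta(r):=r\,x(\log r)$ for $r=e^s\in(0,1]$ and differentiating, \eqref{Eqn:DynamicalSystem-a} gives $\zeta'(r)=x+x'=y=y(\log r)$, so the two defining relations in \eqref{Eqn:change} are mutually consistent. Defining $\rho(r):=r^{-2}\zeta'(r)=e^{-2s}y(s)$, equation \eqref{Eqn:DynamicalSystem-b} is precisely \eqref{Eqn:Poisson} read backwards. I would then fix the potential by integrating $r^2\phi'=\zeta(r)$ with $\phi(1)=0$, namely $\phi(r)=-\int_r^1 t^{-2}\zeta(t)\,dt$, which gives the Dirichlet condition at $r=1$ and, since $r^2\phi'=\zeta\to0$ as $r\to0^+$, the regularity $\lim_{r\to0^+}r^2\phi'(r)=0$. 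Finally, $\tfrac{d}{dr}\big(H(\rho)+\phi\big)=H'(\rho)\,\rho'+\phi'$ vanishes because \eqref{Eqn:Poisson} reads $\rho'=-R(\rho)\,\zeta/r^2$ and $R=1/H'$, so $H(\rho)+\phi$ is a constant $\lambda$; this is exactly $\rho=F(\lambda-\phi)$, and the mass constraint follows from $\int_B\rho\,dx=4\pi\,\zeta(1)=4\pi\,x(0)=M$.

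The step I expect to be most delicate is the analysis at the singular endpoint $s\to-\infty$, that is, at the center $r=0$ where the logarithmic change of variables degenerates. One must verify that an orbit satisfying \eqref{Eqn:Lim} genuinely produces a classical solution: the finiteness of $\lim_{s\to-\infty}e^{-2s}y(s)=\rho(0)$ should be shown to force $\zeta(r)\sim\tfrac13\,\rho(0)\,r^3$, hence $x(s),y(s)\to0$ of order $e^{2s}$ and in particular $(x,y)\to(0,0)$, so that $\phi$ extends as a $C^2$ function up to the origin with no $1/r$ singularity. Conversely one checks that this asymptotic behavior is equivalent to the boundedness and smoothness of $\rho$ at the center implicit in \eqref{Eqn:Fermi--Dirac}. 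Once this endpoint matching is secured, uniqueness for the Cauchy problem associated with \eqref{Eqn:DynamicalSystem-a}--\eqref{Eqn:DynamicalSystem-b} makes the two constructions inverse to one another, which is the content of the lemma.
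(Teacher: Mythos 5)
Your proposal is correct and, in its forward direction, coincides with the paper's own argument: the paper gives no separate proof of this lemma, but states it as a summary of the computation in Section~\ref{Sec:Dynamical} (radial symmetry via Gidas--Ni--Nirenberg, integration of the Poisson equation to get $r^2\phi'=\zeta$, derivation of \eqref{Eqn:Poisson}, and the logarithmic change of variables \eqref{Eqn:change}). What you add --- the explicit reconstruction of $(\rho,\phi,\lambda)$ from an orbit, with $\phi(r)=-\int_r^1 t^{-2}\zeta(t)\,dt$ and $\lambda=H(\rho)+\phi$, and the verification that the mass and supremum-norm identifications in \eqref{Eqn:Lim} hold --- is left implicit in the paper; in particular, the endpoint asymptotics you flag as the delicate step (finiteness of $\lim_{s\to-\infty}e^{-2s}y(s)$ forcing $e^{-2s}x(s)\to\rho(0)/3$, hence $\zeta(r)\sim\tfrac13\,\rho(0)\,r^3$ and regularity of $\phi$ at the origin) is precisely what the paper establishes separately as Lemma~\ref{Lem:Limits}, via the monotonicity of $e^s\,(3\,x-y)$ and l'H\^opital's rule. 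So your attempt is a completion rather than a departure: same change of variables and same computations, with the inverse direction and the bookkeeping at $r=0$ spelled out.
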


\section{\emph{A priori} estimates on the dynamical system}\label{Sec:APriori}

In this section we establish some \emph{a priori} estimates on the solutions of \eqref{Eqn:DynamicalSystem-a}--\eqref{Eqn:DynamicalSystem-b}. We start with some  observations on invariant regions.
\begin{lemma}\label{Lem:APriori} Consider the dynamical system \eqref{Eqn:DynamicalSystem-a}--\eqref{Eqn:DynamicalSystem-b} with $R$ corresponding to one of the three cases, {\rm (MB)}, {\rm (FD)}, or {\rm (sFD)}, for some $\eta\ge0$. Then $R$ is continuous and such that $0\le R(z)\le z$ for any $z\in\R^+$. As a consequence, the $y=0$ axis is a stable manifold under the action of the flow, while the half-line $y=3\,x$, $x>0$ is tangent to the unstable manifold. Moreover, all trajectories with $y\ge 3\,x$, $x>0$ at $s=0$ are out of the positive quadrant for some negative time or, to be more specific, are such that $x(s)<0$ or $y(s)<0$ for any $s<0$ large enough.\end{lemma}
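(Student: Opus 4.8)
The plan is to establish the three claims in sequence, each following from the structure of the system once the key bound $0 \le R(z) \le z$ is in hand. First I would verify this bound case by case. In (MB), $R(z)=z$ so equality holds. In (sFD), $R(z)=1/(\frac1z+\frac{\eta}{z^{1/3}})$, and since $\eta>0$ the denominator exceeds $1/z$, giving $0<R(z)<z$; continuity and nonnegativity on $\R^+$ are immediate. For (FD) one uses $R=1/H'$ with $H=f_{1/2}^{-1}(2\cdot/\mu)$; here I would invoke the properties of the Fermi functions $f_{\pm1/2}$ recorded in the Appendix to obtain $0 \le R(z) \le z$, this being the one place I expect to lean on external estimates rather than direct computation.

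Next I would treat the stable and unstable manifold claims by linearizing, but with care because the system is non-autonomous through the $e^{2s}R(e^{-2s}y)$ term. The cleanest route is to examine behavior near the origin in the $s\to-\infty$ limit, where the relevant quantity is $e^{-2s}y$, i.e. the density $\rho$; as $s\to-\infty$ the argument $z=e^{-2s}y$ governs the nonlinearity and the effective coefficient $e^{2s}R(e^{-2s}y)=R(z)/z \cdot y$ behaves like $y$ times a factor in $[0,1]$. For the $y=0$ axis, observe that if $y(s_0)=0$ then $y'=2y - e^{2s}R(e^{-2s}y)x$ vanishes identically along $y\equiv0$ (since $R(0)=0$), so $\{y=0\}$ is invariant and, reading \eqref{Eqn:DynamicalSystem-a}, $x'=-x$ there, so trajectories on it decay to the origin as $s\to-\infty$: this is the stable manifold. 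For the unstable direction I would compute the linearization at the origin of the limiting autonomous system, whose Jacobian $\begin{pmatrix}-1 & 1\\ -1 & 2\end{pmatrix}$ (using $R(z)\sim z$) has eigenvalues $\tfrac{1\pm\sqrt5}{2}$ with the positive eigenvalue $\tfrac{1+\sqrt5}{2}$ giving eigenvector direction $y=3x$ after accounting for the scaling, so $y=3x$ is tangent to the unstable manifold.

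For the final statement I would argue by a trapping/invariant-region argument run backward in $s$. Using $0 \le R(z) \le z$, rewrite \eqref{Eqn:DynamicalSystem-b} as $y' = 2y - c(s)\,x$ with $0 \le c(s) = e^{2s}R(e^{-2s}y) \le y$, and compare the field along the line $y=3x$: there $x'=y-x=2x$ and $y'=6x - c(s)x$, so $\tfrac{d}{ds}(y-3x) = y' - 3x' = -c(s)x$, which for $x>0$ and $c(s)\ge 0$ is $\le 0$ forward in time, meaning the region $\{y\ge 3x\}$ is \emph{entered} as $s$ increases and \emph{left} as $s$ decreases. I would then show that any trajectory starting at $s=0$ with $y\ge 3x$, $x>0$ must, upon decreasing $s$, either have $x'=y-x>0$ keeping $x$ growing backward until $x$ changes sign, or else $y$ crosses zero; a careful sign analysis of $x'$ and $y'$ on the backward-time cone shows the trajectory cannot remain in the closed positive quadrant for all $s<0$, forcing $x(s)<0$ or $y(s)<0$ for sufficiently negative $s$.

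The main obstacle I anticipate is making the tangency and invariant-region arguments rigorous in the presence of the non-autonomous factor $e^{2s}R(e^{-2s}y)$: one must control this term uniformly enough to conclude the same qualitative picture as the autonomous (MB) case, and near the origin the interplay between the vanishing of $e^{2s}$ and the growth of the argument $e^{-2s}y$ requires the bound $R(z)\le z$ to be used in exactly the right direction. Establishing the unstable-manifold tangency precisely, rather than just for the frozen limiting system, is the delicate point, and I would handle it by passing to the density variable and using that $R(z)/z \to 1$ as $z\to 0$ (equivalently as $s\to-\infty$) so that the linearization of the limiting autonomous problem genuinely governs the emergence from the origin.
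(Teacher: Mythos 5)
Your overall strategy (establish $0\le R(z)\le z$, use $R(0)=0$ for invariance of the $y=0$ axis, linearize at the origin, then run a backward-in-time invariant-cone argument) is the same as the paper's, but two of your steps contain genuine errors. First, the linearization: for the limiting system $x'=y-x$, $y'=(2-x)\,y$, the Jacobian at the origin is
\begin{equation*}
\begin{pmatrix}-1 & 1\\ 0 & 2\end{pmatrix},
\end{equation*}
since $\partial_x\big((2-x)\,y\big)=-y$ vanishes at $(0,0)$; the matrix $\begin{pmatrix}-1&1\\-1&2\end{pmatrix}$ you wrote, with eigenvalues $\tfrac{1\pm\sqrt5}{2}$, is wrong, and its unstable eigenvector is $(1,\tfrac{3+\sqrt5}{2})$, not $(1,3)$ — no ``accounting for the scaling'' repairs that. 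The correct (upper triangular) Jacobian has eigenvalues $-1$ and $2$ with eigenvectors $(1,0)$ and $(1,3)$, which is exactly what the lemma asserts; note also that the nonlinear correction is controlled by $e^{2s}R(e^{-2s}y)\,x\le x\,y$, so this linearization is legitimate for all three cases, not just (MB). (A minor slip of the same kind: on the invariant axis $y=0$ one has $x'=-x$, so trajectories decay to the origin as $s\to+\infty$, not $s\to-\infty$; that is why it is the \emph{stable} manifold.)

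Second, and more seriously, your expulsion argument for the cone $\{y\ge 3x,\;x>0\}$ does not close. In that cone the only sign information is $x'=y-x\ge 2x>0$, and backward in time this merely gives $0\le x(s)\le x(0)\,e^{2s}$, which is perfectly consistent with the trajectory staying in the closed positive quadrant for all $s<0$ (it is exactly what happens on the unstable manifold itself); ``$x$ keeps growing backward until it changes sign'' is not implied by any sign analysis. What forces the exit is the \emph{quantitative} monotonicity that you only wrote on the line $y=3x$: for $x\ge0$ one has $\tfrac{d}{ds}\big[(y-3x)\,e^s\big]=-\,e^{3s}\,R\(e^{-2s}y\)\,x\le0$ everywhere in the quadrant, so backward in time $y(s)-3x(s)\ge\big(y(0)-3x(0)\big)e^{-s}=:C\,e^{-s}$. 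Feeding this into $x'=2x+(y-3x)\ge 2x+C\,e^{-s}$ and integrating gives $x(s)\le\(x(0)+\tfrac C3\)e^{2s}-\tfrac C3\,e^{-s}\to-\infty$, the contradiction, provided $C=y(0)-3x(0)>0$. The borderline case $y(0)=3x(0)$ then needs a separate strictness step (the paper's identity $(3x-y)'=-(3x-y)+e^{2s}R(e^{-2s}y)\,x$ shows $(3x-y)\,e^s$ strictly increases while $x,y>0$, so at some $t<0$ one has $y(t)>3x(t)$ and the previous case applies); your proposal does not address this case at all. These are the two missing ingredients; with them your outline becomes essentially the paper's proof.
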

\begin{proof} The estimate $R(z)\le z$ in the (FD) case will be shown in Lemma~\ref{Lem:Equivalents}, in the Appendix. It is straightforward in the other cases.

The $y=0$, $x>0$ half-line is stable under the action of the flow because $R(0)=0$ and $(0,0)$ is an attraction point along this half-line. On the $x=0$, $y>0$ half-line the vector field points inwards the positive quadrant. Hence the positive quadrant is stable. The point $(0,0)$ is a stationary point with stable and unstable directions given respectively by $(1,0)$ and $(1,3)$.

In the positive quadrant, from the inequalities
\[
y'\le2\,y\quad\mbox{and}\quad3\,x'-\,y'\ge-\,(3\,x-\,y)\;,
\]
we deduce that
\[
y(s)\ge y(0)\,e^{2s}\quad\mbox{and}\quad3\,x(s)\,e^s\le3\,x(0)-\,y(0)+y(s)\,e^s
\]
for $s<0$, so that we reach a contradiction, namely $x(s)<0$ if $3\,x(0)-\,y(0)<0$ and $|s|$ is taken large enough. If $3\,x(0)-\,y(0)=0$, we get from
\be{Eqn:Asymp}
(3\,x-\,y)'=\,-\,(3\,x-\,y)+e^{2s}\,R\(e^{-2s}\,y\)\,x
\ee
that $(3\,x(t)-\,y(t))\,e^t<0$ for some $t<0$, arbitrarily small, and again reach a contradiction by the estimate
\be{Eqn:Asympx}
3\,x(s)\,e^s\le3\,x(t)\,e^t-\,y(t)\,(e^t-\,e^s)\;,\quad s<t
\ee
if we assume that $x(s)$ and $y(s)$ are positive for any $s<t$. It should be noted that the crucial monotonicity of $(3\,x(t)-y(t))\,e^t$ for $x(t)>0$ was used in the argument above to get the conclusion. \end{proof}

A straightforward consequence of Lemma~\ref{Lem:APriori} is that $(0,0)$ is the unique stationary point in \hbox{$\{(x,y)\in\R^2\,:\,x\ge0\;\mbox{and}\;y\ge0\}$}. Hence we know that
\[
\lim_{s\to-\infty}(x(s),y(s))=(0,0)\;.
\]
Notice that this is compatible with \eqref{Eqn:Lim}. A slightly more precise asymptotic description of the solutions goes as follows.
\begin{lemma}\label{Lem:Limits} Any solution of \eqref{Eqn:DynamicalSystem-a}--\eqref{Eqn:DynamicalSystem-b} satisfying \eqref{Eqn:Lim} and confined to the positive quadrant is such that
\[
\lim_{s\to-\infty}e^{-2s}\,y(s)=3\lim_{s\to-\infty}e^{-2s}\,x(s)=\rho(0)\;.
\]\end{lemma}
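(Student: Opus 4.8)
The plan is to pass to the rescaled variables $X(s):=e^{-2s}\,x(s)$ and $Y(s):=e^{-2s}\,y(s)$, for which condition~\eqref{Eqn:Lim} already supplies half of the statement, namely $\lim_{s\to-\infty}Y(s)=\rho(0)$. Differentiating and substituting \eqref{Eqn:DynamicalSystem-a}--\eqref{Eqn:DynamicalSystem-b}, one gets the closed system
\[
X'=Y-3\,X\;,\qquad Y'=-\,R(Y)\,e^{2s}\,X\;,
\]
so the whole content reduces to proving $\lim_{s\to-\infty}X(s)=\tfrac13\,\rho(0)$. First I would record the elementary consequences of confinement to the positive quadrant: by Lemma~\ref{Lem:APriori} the function $R$ is continuous with $R\ge0$, so the second equation gives $Y'\le0$; combined with $Y\to\rho(0)$ this forces $0\le Y(s)\le\rho(0)$ for all $s\le0$, whence $Y$ is bounded. (The degenerate case $\rho(0)=0$ forces $\rho\equiv0$ and is trivial, so one may assume $\rho(0)>0$.)

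The heart of the matter is that $X$ solves the scalar linear equation $X'+3\,X=Y$, whose homogeneous solutions $e^{-3s}$ blow up as $s\to-\infty$; equivalently, the equilibrium $X=Y/3$ of the frozen equation is repelling in the backward direction, and the boundary conditions must be used to discard the growing mode. To do this I set $W:=e^{3s}\,X\ge0$, observe that $W'=e^{3s}\,Y\ge0$ is integrable near $-\infty$ because $Y$ is bounded, and conclude that $W_{-\infty}:=\lim_{s\to-\infty}W(s)$ exists and is finite. The key step, which I expect to be the only delicate point, is to rule out $W_{-\infty}>0$: if it were positive then $X\ge\tfrac12\,W_{-\infty}\,e^{-3s}$ for $-s$ large, hence $e^{2s}\,X\to+\infty$, and since $R(Y)\to R(\rho(0))>0$ by continuity, the second equation would give $Y'\le-\,c\,e^{-s}$ for some $c>0$; backward integration then drives $Y\to+\infty$, contradicting $Y\le\rho(0)$.

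Once $W_{-\infty}=0$ is established, integrating $W'=e^{3s}\,Y$ from $-\infty$ yields the representation
\[
X(s)=e^{-3s}\int_{-\infty}^{s}e^{3\sigma}\,Y(\sigma)\,d\sigma\;,
\]
and a single application of L'Hospital's rule (numerator and denominator both tending to $0$) gives $\lim_{s\to-\infty}X(s)=\tfrac13\lim_{s\to-\infty}Y(s)=\tfrac13\,\rho(0)$, which is exactly $3\lim_{s\to-\infty}e^{-2s}x(s)=\lim_{s\to-\infty}e^{-2s}y(s)=\rho(0)$. As a shortcut that bypasses the growing-mode discussion entirely, one may instead invoke Lemma~\ref{Lem:Parametrization} and compute directly from \eqref{Eqn:zeta} that $e^{-2s}y=\rho(r)\to\rho(0)$ and $e^{-2s}x=\zeta(r)/r^{3}=r^{-3}\int_0^r z^2\,\rho(z)\,dz\to\tfrac13\,\rho(0)$ by continuity of $\rho$ at the origin, the bound $\zeta(r)\le\tfrac13\,r^3\,\|\rho\|_\infty$ making the boundedness of $X$ transparent; I would present the dynamical argument as the main route and mention this as the quicker alternative.
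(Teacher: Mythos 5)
Your proof is correct, and it takes a genuinely different route from the paper's, one that is in fact more complete at the delicate point. The paper works directly with $(x,y)$: it invokes the monotonicity identity \eqref{Eqn:Asymp}, written as $\frac{d}{ds}\big((3x-y)\,e^s\big)=e^{3s}R(e^{-2s}y)\,x\ge0$, to assert that $\lim_{s\to-\infty}e^{-2s}x(s)$ exists, and then identifies the value by applying l'H\^opital's rule to the ratio $x/y$, obtaining the self-consistent equation $L=(1-L)/2$ and hence $L=1/3$. You instead pass to the rescaled variables $(X,Y)=(e^{-2s}x,e^{-2s}y)$ (the paper's $(q,p)$ of \eqref{Eqn:Change2} in Section~\ref{Sec:Bifurcation}), solve the linear equation $X'+3X=Y$ by variation of constants, and explicitly rule out the backward-growing mode: if $W_{-\infty}=\lim_{s\to-\infty}e^{3s}X(s)$ were positive, then $e^{2s}X\ge W_{-\infty}e^{-s}\to\infty$, so the equation $Y'=-R(Y)\,e^{2s}X$ together with $R(Y)\to R(\rho(0))>0$ would force $Y\to+\infty$ backward in time, contradicting the monotone bound $0\le Y\le\rho(0)$; the limit $X\to\rho(0)/3$ then drops out of the integral representation. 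Both arguments rest on the same ingredients (the sign and monotonicity facts from Lemma~\ref{Lem:APriori}, continuity of $R$, l'H\^opital), but your exclusion-of-the-growing-mode step is exactly what the paper compresses into the unexplained clause ``so that $\lim_{s\to-\infty}e^{-2s}x(s)$ exists''; your version makes that step explicit, and you also dispose of the degenerate case $\rho(0)=0$, which the paper's division by $y$ tacitly assumes away. What the paper's route buys in exchange is brevity: granted existence of the limit, the fixed-point equation for $L$ finishes in two lines with no integral representation. Your closing shortcut via Lemma~\ref{Lem:Parametrization} and \eqref{Eqn:zeta} (continuity of $\rho$ at the origin, $\zeta(r)\sim\tfrac13\rho(0)\,r^3$) is also legitimate and is the quickest of all, though it leans on the regularity of the PDE solution rather than on the dynamical system itself, so presenting it as a remark alongside the dynamical argument, as you do, is the right choice.
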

\begin{proof} The first limit $\lim_{s\to-\infty}e^{-2s}\,y(s)=\rho(0)$ arises from \eqref{Eqn:Lim}. From \eqref{Eqn:Asymp} and \eqref{Eqn:Asympx}, we know that
\[
\frac d{ds}\,(3\,x-\,y)\,e^s=e^{3s}\,R\(e^{-2s}\,y\)\,x\sim e^{3s}\,R\(\rho(0)\)\,x\quad\mbox{as}\quad s\to-\infty\;,
\]
so that $\lim_{s\to-\infty}e^{-2s}\,x(s)$ exists. By l'H\^opital's rule, we can compute
\[
L:=\lim_{s\to-\infty} \frac{x(s)}{y(s)}=\lim_{s\to-\infty} \frac{x'(s)}{y'(s)}=\lim_{s\to-\infty} \frac{y(s)-\,x(s)}{2y(s)-\,x(s)\,e^{2s}\,R(y(s)\,e^{-2s})}=\frac{1-L}2\;,
\]
whence $L=1/3$ follows.\end{proof}

Notice that the existence of a branch of solutions follows from the existence theory for elliptic equations that can be found, among others, in \cite{MR2136979} and \cite{MR2506790}.

\section{\emph{A priori} estimates depending on the mass}\label{Sec:Mass}

Let us notice that
\be{Eqn:MassEstSimple}
M\le\frac{4\pi}3\,\|\rho\|_\infty
\ee
readily follows from the definition of $M=\int_B \rho\,dx$. Here $B$ is the unit ball in $\R^3$.
\begin{lemma}\label{Lem:MassSupNorm} Consider a solution of \eqref{Eqn:Fermi--Dirac} with mass $M$ and let $m:=\frac M{4\pi}$. With $F=H^{-1}$ and $R=1/H'$ corresponding to one of the three cases {\rm (MB)}, {\rm (FD)} or {\rm (sFD)}, we have the estimate
\be{Eqn:MassEstDifficult}
2\,H(\|\rho\|_\infty)-R(\|\rho\|_\infty)\,m\le m+\,2\,H(3\,m)\;.
\ee
\end{lemma}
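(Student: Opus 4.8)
I want to prove the inequality
\[
2\,H(\|\rho\|_\infty)-R(\|\rho\|_\infty)\,m\le m+2\,H(3\,m)
\]
by evaluating the dynamical system at the two ends of the admissible $s$-interval and exploiting a suitable monotone quantity along the flow. The two natural reference points are $s=0$, where Lemma~\ref{Lem:Parametrization} and \eqref{Eqn:Lim} give $x(0)=m$ and $y(0)=\zeta'(1)$, and the limit $s\to-\infty$, where Lemma~\ref{Lem:Limits} tells us that $e^{-2s}y(s)\to\rho(0)=\|\rho\|_\infty$ and $e^{-2s}x(s)\to\tfrac13\,\|\rho\|_\infty$. The left-hand side involves $H$ and $R$ evaluated at $\|\rho\|_\infty$, which is the $s\to-\infty$ datum, while the right-hand side involves $H(3m)$, with $3m$ matching the factor $3$ appearing in the unstable direction $(1,3)$ of the origin; this strongly suggests that the inequality is an integrated conservation/dissipation law read off between the two endpoints.

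\emph{Building the right functional.} The guiding idea is to find a function $\Phi(s)$ along trajectories whose derivative has a definite sign once the bound $0\le R(z)\le z$ from Lemma~\ref{Lem:APriori} is invoked. Recalling $P'(z)=z\,H'(z)$ and $R=1/H'$, I would go back to the Poisson equation \eqref{Eqn:Poisson} in radial form and multiply by an appropriate integrating factor to manufacture an energy-type identity for the density. Concretely, the combination $2H(\rho)$ evaluated at the two radii, together with the mass-weighted term $R(\rho)\,m$, points to multiplying the equation $\rho'+R(\rho)\,\zeta/r^2=0$ by $r^2$ or by $\rho$-dependent weights and integrating from $0$ to $1$; the endpoint contributions at $r=0$ produce $H(\|\rho\|_\infty)$ and the $R(\|\rho\|_\infty)$ term, while at $r=1$ one gets $H(\rho(1))$ together with mass terms. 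The inequality (rather than equality) will come precisely from replacing $R(\rho(r))$ by its two-sided bound $0\le R(\rho)\le \rho$ on the error term, and from monotonicity of $H$ allowing one to bound $\rho(1)$ from below by the bulk value $3m$ (this uses that the average $\int_0^1 r^2\rho\,dr = m$ forces $\rho$ to take the value $3m$ somewhere, and radial monotonicity of $\rho$ controls the boundary value).

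\emph{The main obstacle.} The delicate step will be the boundary/averaging argument that converts the exact identity into the stated one-sided bound with the clean constant $3m$ on the right. I expect to need that $\rho$ is radially decreasing (which follows since $\rho=F(\lambda-\phi)$ with $F$ increasing and $\phi$ superharmonic-type monotone from the Poisson equation), so that $\rho(1)=\min_{B}\rho$ and the mean-value constraint $m=\int_0^1 r^2\rho\,dr\le \tfrac13\,\rho(0)$ from \eqref{Eqn:MassEstSimple} can be combined with $\rho(1)\le 3m$ in the right direction. Turning these monotonicity and averaging facts into precisely the term $2\,H(3m)$, while simultaneously controlling the sign of the $R$-dependent discrepancy uniformly over the three cases (MB), (FD), (sFD), is where the care is required; everything else is a matter of integrating \eqref{Eqn:Poisson} against the correct weight and reading off the endpoint values via Lemma~\ref{Lem:Limits}.
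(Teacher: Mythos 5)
Your overall frame---integrate the system between the endpoint $s=0$ (where $x(0)=m$, $y(0)=\rho(1)$) and $s\to-\infty$ (where $e^{-2s}y\to\|\rho\|_\infty$), then close the argument with $\rho(1)\le 3m$---is indeed how the paper proceeds, and your averaging route to $\rho(1)\le 3m$ (radial decrease of $\rho$ plus $m=\int_0^1r^2\rho\,dr\ge\rho(1)/3$) is a sound substitute for the paper's use of the monotone quantity $e^s(3x-y)$, modulo the slip where you say you will bound $\rho(1)$ \emph{from below} by $3m$ (the needed direction, which you do state correctly later, is $\rho(1)\le 3m$, feeding into $H(\rho(1))\le H(3m)$ since $H$ is increasing). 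The genuine gap is that the analytic core is never produced. The paper's proof rests on three exact integrations: (i) dividing $\rho'+R(\rho)\,\zeta/r^2=0$ by $R(\rho)$, i.e.\ using $H'=1/R$, gives $\frac d{ds}\,H(e^{-2s}y(s))=-\,x(s)$, hence $H(y(0))-H(\|\rho\|_\infty)=-\int_{-\infty}^0x\,ds$; (ii) integrating $x'=y-x$ converts this into $H(y(0))-H(\|\rho\|_\infty)=x(0)-\int_{-\infty}^0y\,ds$; (iii) integrating $y'=2y-e^{2s}R(e^{-2s}y)\,x$ yields $\int_{-\infty}^0y\,ds\le\frac12\,y(0)+\frac12\,R(\|\rho\|_\infty)\,x(0)$, and this last step is the sole source of both the factor $2$ in $2\,H(\|\rho\|_\infty)$ and the term $R(\|\rho\|_\infty)\,m$. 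Your sketch (``multiply by $r^2$ or by $\rho$-dependent weights'') does not pin down any of these identities, and in particular contains no explanation of where the coefficient $2$ comes from.

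Moreover, the one mechanism you do commit to would fail. The term $R(\|\rho\|_\infty)\,m$ is not an ``endpoint contribution at $r=0$''; it arises from bounding the interaction integral $\int_{-\infty}^0e^{2s}R(e^{-2s}y)\,x\,ds$ (equivalently $\int_0^1R(\rho)\,\zeta\,dr$). If you estimate that integral with the two-sided bound $0\le R(z)\le z$, as you propose, you obtain $\int_0^1\rho\,\zeta\,dr\le\|\rho\|_\infty\,m$ and hence a final inequality carrying $\|\rho\|_\infty\,m$ where \eqref{Eqn:MassEstDifficult} has $R(\|\rho\|_\infty)\,m$; since $R(z)\le z$ with an unbounded gap in the (FD) and (sFD) cases ($z-R(z)\sim z$ as $z\to\infty$ for fixed $\eta>0$), that conclusion is strictly weaker than the lemma and does not imply it. The correct tools, absent from your plan, are the monotonicity (increase) of $R$ combined with the two flow monotonicities $(e^{-2s}y)'\le0$, which gives $R(e^{-2s}y(s))\le R(\|\rho\|_\infty)$, and $(e^sx)'=e^sy\ge0$, which gives $e^sx(s)\le x(0)=m$; together these produce exactly $\int_{-\infty}^0e^{2s}R(e^{-2s}y)\,x\,ds\le R(\|\rho\|_\infty)\,m$. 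You yourself flag this step as ``where the care is required,'' which is accurate: it is precisely the part of the proof that is missing.
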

\begin{proof} Indeed, from \eqref{Eqn:DynamicalSystem-b} and $H'=1/R$ we get the following equality
\[
\frac d{ds}\big(H(y(s)\,e^{-2s}\big)=\frac{\big(y(s)\,e^{-2s}\big)'}{R\big(y(s)\,e^{-2s}\big)}=-\,x(s)\;.
\]
By integrating this identity on the interval $(-\infty,0]$ and using \eqref{Eqn:Lim}, one obtains
\[\label{Eqn:Tes}
H(y(0))-\,H(\|\rho\|_\infty)=-\int_{-\infty}^0 x(s)\;ds\;.
\]
Then by integrating \eqref{Eqn:DynamicalSystem-a} and using $\lim_{t\to-\infty}x(t)=0$, according to Lemma \ref{Lem:Limits}, we get $x(0)=\int_{-\infty}^0 x'(s)\;ds=\int_{-\infty}^0 y(s)\;ds-\int_{-\infty}^0 x(s)\;ds$. Hence one arrives at
\be{Eqn:Teq}
H(y(0))-\,H(\|\rho\|_\infty)=-\int_{-\infty}^0 y(s)\;ds+x(0)\;.
\ee
Next we may integrate \eqref{Eqn:DynamicalSystem-b} and use $\lim_{s\to-\infty}y(s)=0$, according to Lemma \ref{Lem:Limits}, and the monotonicity of $R$ (see the expression of the Fermi function $f_\alpha$ in Appendix~\ref{Appendix} in the (sFD) case), to get
\begin{multline*}
y(0)=2\int_{-\infty}^0 y(s)\;ds-\int_{-\infty}^0 e^{2s}\,R\(e^{-2s}\,y(s)\)\,x(s)\;ds\\
\ge2\int_{-\infty}^0 y(s)\;ds-\int_{-\infty}^0 e^{s}\,R(\|\rho\|_\infty)\,x(0)\;ds
\end{multline*}
where the inequality follows from $\lim_{s\to-\infty}y(s)\,e^{-2s}=\|\rho\|_\infty$ and $\(y(s)\,e^{-2s}\)'\le0$, as shown in Section~\ref{Sec:APriori}. Inserting this estimate into \eqref{Eqn:Teq} we finally have
\[
H(y(0))-\,H(\|\rho\|_\infty)\ge x(0)-\frac12\,y(0)-\frac12\,x(0)\,R(\|\rho\|_\infty)\;,
\]
which is exactly the claim of the lemma because of \eqref{Eqn:Lim} together with $R\ge 0$ implying monotonicity of $H$  and Lemma~\ref{Lem:Limits} with its proof implying $3\,m=3\,x(0)\ge y(0)$.\end{proof}

Recall that solutions exist for arbitrary large masses in the (FD) and (sFD) cases, that is for $\eta>0$, according to~\cite{MR2136979}. Estimates~\eqref{Eqn:MassEstSimple} and \eqref{Eqn:MassEstDifficult} provide an interesting qualitative property of the branch of solutions for large values of the mass, which is of interest by itself and provides some insight on numerical results of Section~\ref{Sec:Bifurcation}. The proof of the following corollary is straightforward by \eqref{Eqn:MassEstSimple} and \eqref{Eqn:MassEstDifficult}.
\begin{corollary}\label{Cor:LargeMasses} Consider the solutions to~\eqref{Eqn:Fermi--Dirac}, parametrized by $M$, in the {\rm (FD)} and {\rm (sFD)} cases, for $\eta>0$. Then $\|\rho\|_\infty\nearrow\infty$ if and only if $M\nearrow\infty$. \end{corollary}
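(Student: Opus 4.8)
The plan is to prove the two implications separately, each powered by one of the two mass estimates. The easy direction, $M\nearrow\infty\Rightarrow\|\rho\|_\infty\nearrow\infty$, follows at once from \eqref{Eqn:MassEstSimple}: since $m=\frac M{4\pi}\le\frac13\,\|\rho\|_\infty$, we have $\|\rho\|_\infty\ge3\,m\to\infty$ whenever $M\to\infty$. Nothing more is needed here, and the same inequality, read contrapositively, already shows that a bound on $\|\rho\|_\infty$ forces a bound on $M$.

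For the converse, $\|\rho\|_\infty\nearrow\infty\Rightarrow M\nearrow\infty$, I would argue by contraposition, assuming that $m$ stays bounded, say $0\le m\le C$, along the family of solutions, and deducing that $\|\rho\|_\infty$ must then stay bounded. Estimate \eqref{Eqn:MassEstDifficult} gives
\[
2\,H(\|\rho\|_\infty)-R(\|\rho\|_\infty)\,m\le m+2\,H(3\,m)\le C+2\,H(3\,C),
\]
the last step using that $H$ is continuous and increasing. The right-hand side is a fixed finite constant, so the whole task is to show that the left-hand side is coercive in $z=\|\rho\|_\infty$. Using $R\ge0$ and $m\le C$, I bound it from below by $2\,H(z)-C\,R(z)$, so it suffices to verify that
\[
2\,H(z)-C\,R(z)\longrightarrow+\infty\qquad\text{as}\quad z\to+\infty .
\]

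The crux is therefore the asymptotic comparison $R(z)=o\big(H(z)\big)$ as $z\to\infty$, together with $H(z)\to\infty$. In the (sFD) case this is elementary and explicit: from $H(z)=\log z+\frac32\,\eta\,z^{2/3}$ and $R(z)=z/(1+\eta\,z^{2/3})$ one gets, for $z\ge1$, the bounds $2\,H(z)\ge3\,\eta\,z^{2/3}$ and $R(z)\le\eta^{-1}\,z^{1/3}$, whence $2\,H(z)-C\,R(z)\ge3\,\eta\,z^{2/3}-C\,\eta^{-1}\,z^{1/3}\to+\infty$. In the (FD) case the same conclusion rests on the asymptotics of the Fermi functions, namely $H(z)\sim c_1\,z^{2/3}$ and $R(z)\sim c_2\,z^{1/3}$ with $c_1,c_2>0$, which are exactly the equivalents recorded in Lemma~\ref{Lem:Equivalents} of the Appendix; this is the one genuinely nontrivial input, and I expect it to be the main obstacle, since everything else is bookkeeping with monotone functions. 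Granting it, the displayed coercivity holds, so the left-hand side of \eqref{Eqn:MassEstDifficult} cannot remain bounded unless $\|\rho\|_\infty$ does, which closes the contraposition and completes the equivalence.
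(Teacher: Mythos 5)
Your proof is correct and is essentially the paper's own argument: the paper merely remarks that the corollary "is straightforward by \eqref{Eqn:MassEstSimple} and \eqref{Eqn:MassEstDifficult}", and you supply exactly those details --- the easy direction from \eqref{Eqn:MassEstSimple} and the contrapositive coercivity argument $2\,H(z)-C\,R(z)\to+\infty$ from \eqref{Eqn:MassEstDifficult}. One small correction of attribution: in the (FD) case the equivalents $H(z)\sim c_1\,z^{2/3}$ and $R(z)\sim c_2\,z^{1/3}$ are not what Lemma~\ref{Lem:Equivalents} records (that lemma bounds $(z-R(z))/z^{5/3}$, which gives no useful control as $z\to\infty$); they follow instead from the Fermi-function asymptotics $f_\alpha(z)\sim z^{\alpha+1}/(\alpha+1)$ as $z\to+\infty$ stated earlier in Appendix~\ref{Appendix}, applied to $H(z)=f_{1/2}^{-1}(2z/\mu)$ and $R(z)=\frac\mu4\,f_{-1/2}\big(f_{1/2}^{-1}(2z/\mu)\big)$.
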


\section{Main continuity results}\label{Sec:Continuity}

In this section we establish the convergence as \hbox{$\eta\to0_+$} of the Fermi--Dirac (FD) model and the simplified Fermi--Dirac (sFD) model to the Maxwell--Boltzmann (MB) model. The main difficulty is that we deal with a non-compact interval and exponential factors.

Let us consider solutions to \eqref{Eqn:DynamicalSystem-a}--\eqref{Eqn:DynamicalSystem-b} and define $S_\eta(z):=z-R_\eta(z)$. An elementary but useful estimate is established in Lemma~\ref{Lem:Equivalents}, in the Appendix, which shows that for some positive positive constant $C$ which depends neither on $\eta$ nor on $\mu$, we have
\be{Eqn:Set}
0\le S_\eta(z)\le C\,\eta\,z^{5/3}\quad\forall\,z\in(0,\infty)\;.
\ee
In order to emphasize the dependence on $\eta\ge0$ we shall denote the solution by $(x_\eta,y_\eta)$.
\begin{lemma}\label{Lem:Continuity} Let $\rho_0>0$ and take any $\rho\le \rho_0$. With the above notations, for any $\eta>0$, consider solutions to \eqref{Eqn:DynamicalSystem-a}--\eqref{Eqn:DynamicalSystem-b} in either the (sFD) or in the (FD) case such that
\be{Eqn:Limits}
\lim_{s\to -\infty} e^{-2s}\,y_\eta(s)=3\,\lim_{s\to -\infty} e^{-2s}\,x_\eta(s)=\rho\;.
\ee
Then as $\eta\to0_+$, $(x_\eta,y_\eta)$ uniformly converges on $(-\infty,0]$ to the solution $(x_0,y_0)$ of~\eqref{Syst:Auton} satisfying \eqref{Eqn:Limits} with $\eta=0$ and, also, $(x_\eta',y_\eta')$ 
uniformly converges to $(x_0',y_0')$.\end{lemma}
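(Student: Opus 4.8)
The plan is to prove convergence by treating the $\eta>0$ system as a perturbation of the autonomous system \eqref{Syst:Auton} and controlling the perturbation uniformly on the \emph{whole} half-line $(-\infty,0]$. The perturbation enters only through the term $e^{2s}\,R_\eta(e^{-2s}\,y)$ in \eqref{Eqn:DynamicalSystem-b}, which differs from the (MB) term $y$ by exactly $e^{2s}\,S_\eta(e^{-2s}\,y)$, where $S_\eta(z)=z-R_\eta(z)$. By the key estimate \eqref{Eqn:Set} we have $0\le S_\eta(z)\le C\,\eta\,z^{5/3}$, so
\[
0\le e^{2s}\,S_\eta(e^{-2s}\,y)\le C\,\eta\,e^{2s}\,(e^{-2s}\,y)^{5/3}=C\,\eta\,e^{-4s/3}\,y^{5/3}\;.
\]
The first step is therefore to write the integral formulation: both $(x_\eta,y_\eta)$ and $(x_0,y_0)$ share the common normalization \eqref{Eqn:Limits} prescribed as $s\to-\infty$, so integrating \eqref{Eqn:DynamicalSystem-a}--\eqref{Eqn:DynamicalSystem-b} from $-\infty$ (using the limits from Lemma~\ref{Lem:Limits}) expresses each solution as a fixed point of an integral operator, and the difference $(x_\eta-x_0,y_\eta-y_0)$ solves an integral equation whose inhomogeneous term is $\int e^{2\sigma}\,S_\eta(e^{-2\sigma}\,y_\eta)\,x_\eta\,d\sigma$.

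Second, I would secure \emph{uniform a priori bounds} on $(x_\eta,y_\eta)$ that are independent of $\eta$ on a common invariant region. Since $\rho\le\rho_0$ is fixed, Lemma~\ref{Lem:APriori} confines all trajectories to the positive quadrant with $y\le 3x$ near $-\infty$, and the monotonicity of $s\mapsto e^{-2s}y(s)$ established in Section~\ref{Sec:APriori} gives $e^{-2s}\,y_\eta(s)\le\rho\le\rho_0$ for all $s\le 0$, hence $y_\eta(s)\le\rho_0\,e^{2s}$. Feeding this into $x_\eta'=y_\eta-x_\eta$ yields a matching bound $x_\eta(s)\le C'\,e^{2s}$. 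These exponential bounds are what make the improper integrals converge and, crucially, they let me estimate the perturbation term: on $(-\infty,0]$,
\[
\int_{-\infty}^0 e^{2\sigma}\,S_\eta(e^{-2\sigma}\,y_\eta)\,x_\eta\;d\sigma\le C\,\eta\int_{-\infty}^0 e^{-4\sigma/3}\,y_\eta^{5/3}\,x_\eta\;d\sigma\le C''\,\eta\int_{-\infty}^0 e^{4\sigma}\;d\sigma=C'''\,\eta\;,
\]
where the powers of $e^{\sigma}$ combine to $e^{(-4/3+10/3+2)\sigma}=e^{4\sigma}$, which is integrable at $-\infty$. So the source term driving the difference is $O(\eta)$ uniformly.

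Third, with an $O(\eta)$ source in hand, I would close the argument by a Gronwall estimate on the difference system. Writing $u=x_\eta-x_0$, $v=y_\eta-y_0$, the linear parts are the same autonomous terms, the difference in the remaining $y$-term splits into a part linear in $v$ (controlled by the $\eta$-independent bounds) plus the $O(\eta)$ perturbation computed above; integrating and applying Gronwall on $(-\infty,0]$ gives $\sup_{s\le0}(|x_\eta-x_0|+|y_\eta-y_0|)\le C\,\eta\to0$. Convergence of the derivatives $(x_\eta',y_\eta')\to(x_0',y_0')$ then follows immediately by substituting the uniform convergence of $(x_\eta,y_\eta)$ back into the right-hand sides \eqref{Eqn:DynamicalSystem-a}--\eqref{Eqn:DynamicalSystem-b}, using continuity of $R_\eta$ and \eqref{Eqn:Set} once more to pass to the limit in the nonlinear term. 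The main obstacle is the non-compactness of the time interval together with the explicit $e^{2s}$ and $e^{-2s}$ factors: one must verify that the exponential decay of $x_\eta,y_\eta$ as $s\to-\infty$ strictly dominates the growth $e^{-4s/3}$ coming from $S_\eta$, so that all the improper integrals converge \emph{uniformly} in $\eta$; the bookkeeping of exponents (the cancellation producing the integrable $e^{4\sigma}$) is precisely where care is required, and it is the near-$-\infty$ behavior, not the behavior near $s=0$, that carries the real difficulty.
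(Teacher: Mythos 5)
Your proof is correct and takes essentially the same route as the paper: the same decomposition of the nonlinear term via $S_\eta(z)=z-R_\eta(z)$ with the bound \eqref{Eqn:Set}, the same monotonicity-derived uniform bounds $y_\eta(s)\le\rho_0\,e^{2s}$ and $x_\eta(s)\le\tfrac13\,\rho_0\,e^{2s}$, the same $O(\eta)\,e^{4s}$ control of the source term in the difference system, and a closing Gronwall argument on $(-\infty,0]$ anchored at the shared limit \eqref{Eqn:Limits}, with derivative convergence obtained by substitution back into the equations. The only cosmetic difference is that the paper runs Gronwall on the weighted quantities $\sup_{s\le t}e^{-2s}|x_\eta-x_0|$ and $\sup_{s\le t}e^{-2s}|y_\eta-y_0|$, which factors out the unstable $e^{2s}$ mode explicitly, whereas your unweighted version also closes because the $e^{4s}$ decay of the source strictly dominates the growth of the linearized flow — precisely the exponent bookkeeping you flag as the delicate point.
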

\begin{proof}Let us start with some preliminary estimates. As in Section~\ref{Sec:APriori}, we know that
\begin{eqnarray*}
&&\(y_\eta\,e^{-2s}\)'=-\,e^{2s}\,R_\eta\(e^{-2s}\,y_\eta\)\,x_\eta\le0\;,\\
&&\(e^s\,(3\,x_\eta-\,y_\eta)\)'=R_\eta\(e^{-2s}\,y_\eta\)\,e^{3s}\,x_\eta\ge0\;,\\
&&\(x_\eta\,e^{-2s}\)'=-\,e^{-2s}\,(3\,x_\eta-\,y_\eta)\le0\;.
\end{eqnarray*}
Taking into account \eqref{Eqn:Limits}, we deduce that
\begin{eqnarray*}
&&y_\eta(s)\le\rho_0\, e^{2s}\;,\\
&&3\,x_\eta(s)\ge y_\eta(s)\;,\\
&&x_\eta(s)\le\tfrac13\,\rho_0\, e^{2s}\;.
\end{eqnarray*}
for any $s\le0$. These estimates are uniform with respect to $\eta\ge0$. Combined with~\eqref{Eqn:Set}, they also imply that
\[\label{Eqn:Sess}
\eta\,f_\eta(s)\,e^{4s}:=S_\eta\(e^{-2s}\,y(s)\)\,e^{2s}\,x\le\tfrac13\,C\,\eta\,\rho_0^{8/3}\,e^{4s}\quad\forall\;s\le0
\]
for some uniformly bounded functions $f_\eta$, with $\eta\ge0$ small: $f_\eta(s)\le\tfrac13\,C\,\rho_0^{8/3}$ for any $s\le0$. Moreover, the difference of $(x_\eta,y_\eta)$ and $(x_0,y_0)$ satisfies the system
\[\label{Eqn:DiffSystem}
\left\{\begin{array}{l}
(x_\eta-x_0)'=(y_\eta-y_0)-(x_\eta-x_0)\;,\\[6pt]
(y_\eta-y_0)'=2\,(y_\eta-y_0)-\(y_\eta\,(x_\eta-x_0)+x_0\,(y_\eta-y_0)\)+\eta\,f_\eta(s)\,e^{4s}
\end{array}\right.
\]
with $f_\eta(s)\le\kappa$. Let us define
\[
A_\eta(t):={\rm sup}_{s\le t}\,e^{-2s}\,|x_\eta(s)-x_0(s)|\quad\mbox{and}\quad B_\eta(t):={\rm sup}_{s\le t}\,e^{-2s}\,|y_\eta(s)-y_0(s)|\;.
\]
We know that $\lim_{t\to-\infty}A_\eta(t)=\lim_{t\to-\infty}B(t)=0$. From the above system we deduce that the estimates
\begin{eqnarray*}
&&\frac d{dt}\(e^{3t}\,A_\eta\)\le e^{3t}\,B_\eta\;,\\
&&\frac d{dt}\,B_\eta\le\(\rho_0\,A_\eta+\tfrac13\,\rho_0\,B_\eta+\eta\, \kappa\)e^{2t}\;,
\end{eqnarray*}
hold for almost all $t<0$. An integration on $(-\infty,t)$ shows that
\begin{eqnarray*}
&&A_\eta\le\tfrac13\,B_\eta\;,\\
&&B_\eta\le\frac12\(\rho_0\,A_\eta+\tfrac13\,\rho_0\,B_\eta+\eta\ \kappa \)e^{2t}\;,
\end{eqnarray*}
for any $t<0$. Altogether, using a Gronwall estimate, we finally arrive at
\[
0\le 3\,A_\eta(t)\le B_\eta(t)\le\frac 12\,\eta\, \kappa \,e^{\rho_0/3}\quad\forall\,t\in(-\infty,0]\;,
\]
which concludes the proof.
\end{proof}

\begin{proof}[Proof of Theorem~\ref{Thm:Main}] Let $\varepsilon>0$ and consider the enlarged singular set of masses
\[
\mathcal S_\varepsilon:=\cup_{n=1}^{\infty} (M_*^n-\varepsilon,M_*^n+\varepsilon)\cup_{n=1}^{\infty} (M^*_n-\varepsilon,M^*_n+\varepsilon)\;.
\]
If $\varepsilon$ is small enough, then $\mathcal M_\varepsilon:=(0,M_1^*)\setminus \mathcal S_\varepsilon$ is non-empty. For any $M\in\mathcal M_\varepsilon$, the map $\rho_0\mapsto x_0(0)=\frac M{4\pi}$ is smooth and locally invertible, if $(x_0,y_0)$ denotes the solution to \eqref{Syst:Auton} satisfying \eqref{Eqn:Limits}. This property is elementary (see the proof of Proposition~\ref{Prop:MB} for details), and it is is also satisfied by the solution $(x_\eta,y_\eta)$ to~\eqref{Eqn:DynamicalSystem-a}--\eqref{Eqn:DynamicalSystem-b} satisfying \eqref{Eqn:Limits}, at least for $\eta>0$ small enough. In fact for any $n\in \mathbb{N}$ and $M\in  (M_*^n,M_*^{n+1})\cup (M^*_n,M^*_{n+1})$ we can choose $\varepsilon>0$ small enough to have $M\in \mathcal M_\varepsilon$ and $\rho_0>0$ large enough to have, due to Lemma \ref{Lem:Continuity}, the uniform convergence of the branch of solutions, in variables $(M,\rho)$, corresponding to (FD) or (sFD) to (MB), as $\eta\rightarrow 0^+$ for any $\rho\le \rho_0$ in $\mathcal M_\varepsilon\times [0,\rho_0]$. There exists therefore a solution to~\eqref{Eqn:DynamicalSystem-a}--\eqref{Eqn:DynamicalSystem-b} with mass $M$ in a neighborhood of $(x_0,y_0)$ both in the (sFD) case and in the (FD) case. On $\mathcal M_\varepsilon$, the solutions in the (MB) case are isolated and in finite number, which allows to conclude the proof. It must be underlined that the similar conclusion holds for (FD) and (sFD) cases as for (MB) one but only for sufficiently small values of $\eta>0$. \end{proof}

\section{Bifurcation diagrams and numerical results}\label{Sec:Bifurcation}

This section is devoted to numerical computations of all solutions in the (sFD) case and goes beyond a simple illustration of Theorem~\ref{Thm:Main}. In particular we compute the solutions for all masses and also plot various quantities of physical interest
like entropy or free energy. Recall that solutions can be characterized as critical points of the free energy under mass constraint.

For numerical computations, it is convenient to introduce yet another change of variables
\be{Eqn:Change2}
p(s):=e^{-2s}\,y(s)\quad\mbox{and}\quad q(s):=e^{-2s}\,x(s)
\ee
which solve
\[
\left\{\begin{array}{l}
q'=p-3\,q\;,\\[6pt]
p'=\,-\,R\(p\)\,e^{2s}\,q\;,
\end{array}\right.
\]
supplemented with the conditions
\[
4\pi\,q(0)=M\;,\quad\lim_{s\to-\infty}q(s)=\tfrac 13\,p_\infty\quad\mbox{and}\quad\lim_{s\to-\infty}p(s)=p_\infty\;.
\]
Our numerical scheme goes as follows. We fix an $\varepsilon>0$, small, and use the fact that $y(s)$ is of the order of $\varepsilon$ for $s=t(\varepsilon)$ if $e^{-2t(\varepsilon)}\,\varepsilon\approx p_\infty$, which determines the dependence of $t(\varepsilon)$ on $\varepsilon$. Hence our approximated solution is given by
\[
t(\varepsilon)=\frac12\,\log\(\frac{\varepsilon}{p_\infty}\)\,,\quad q(t(\varepsilon))=\tfrac 13\,p_\infty\quad\mbox{and}\quad p(t(\varepsilon))=p_\infty\;.
\]
To emphasize the dependence on $\varepsilon$, we shall denote it by $(p^\varepsilon,q^\varepsilon)$. Figs.~\ref{F4} shows the trajectories in the $(p,q)$ phase space.

\begin{figure}[ht]
\includegraphics[width=4cm]{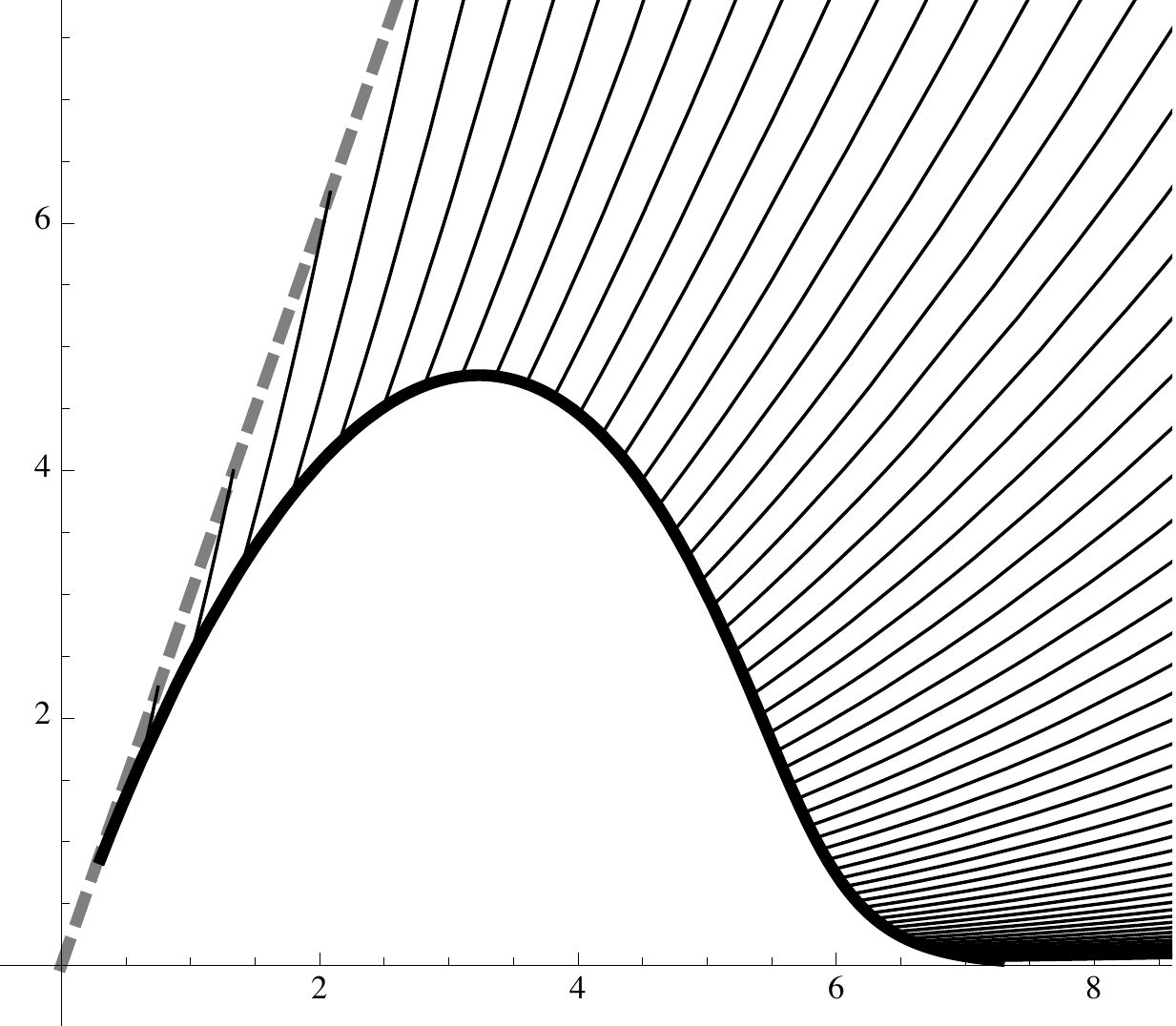} \hspace*{24pt}
\includegraphics[width=4cm]{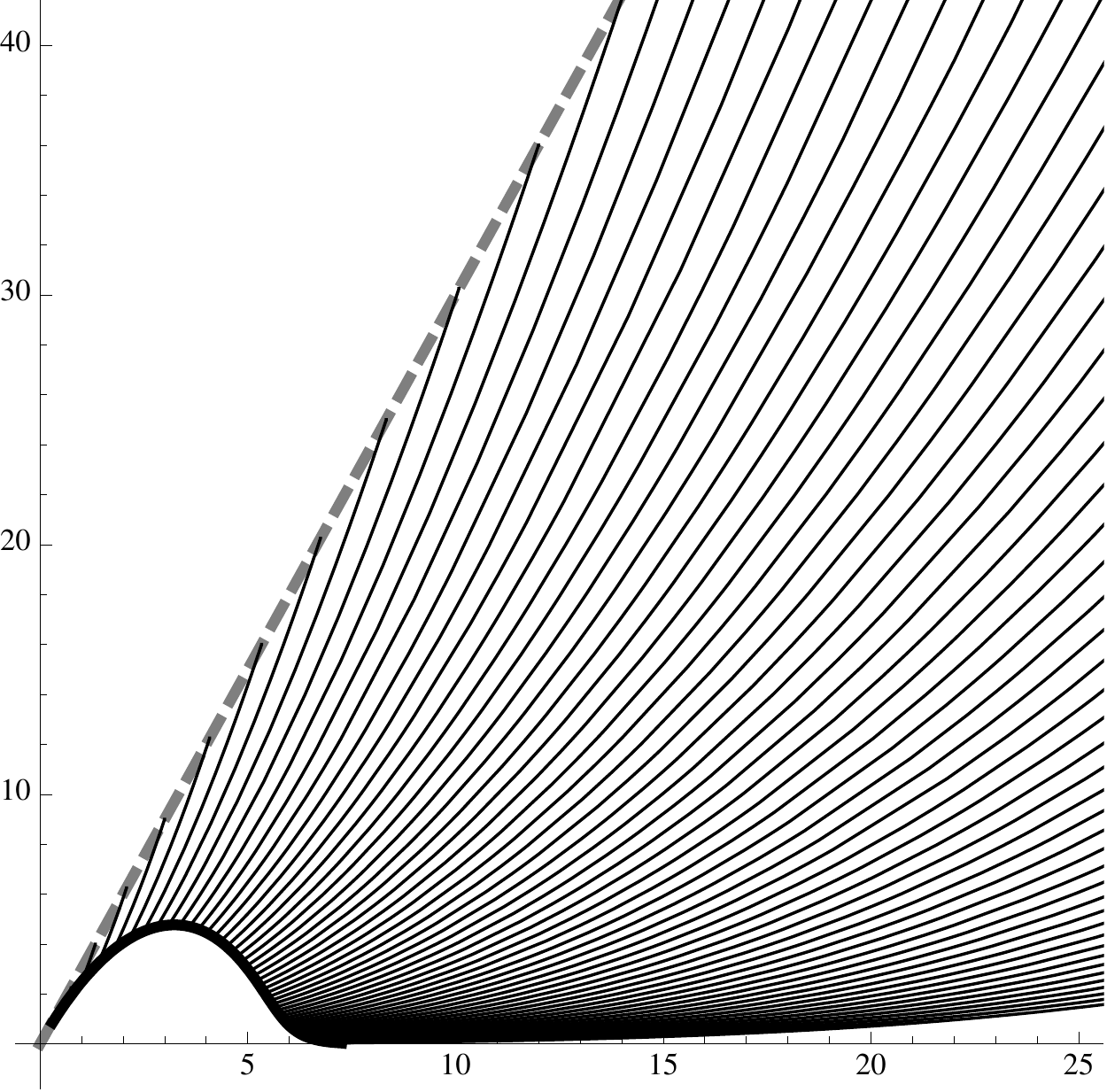}
\caption{\it\small Right: approximating solutions connect the $p=3\,q$, $q>0$ (dashed) half-line corresponding to the initial datum $p=p_\infty$ at $s=t(\varepsilon)<0$, with~$|t(\varepsilon)|$ large, to the curve $q^\varepsilon\mapsto p^\varepsilon$ at time $s=0$. Left: enlargement. Both cases correspond to $\eta=0.1$ in the {\rm (sFD)} model. \label{F4}}
\end{figure}

With this representation, we can draw a bifurcation diagram, see Fig.~\ref{Fig:svm}, which covers a larger range of values of $M$ and $\|\phi\|_\infty$ than the ones taken into account in Fig.~\ref{Fig:mvs}.

\begin{figure}[ht]
\begin{center}
\includegraphics[width=8cm]{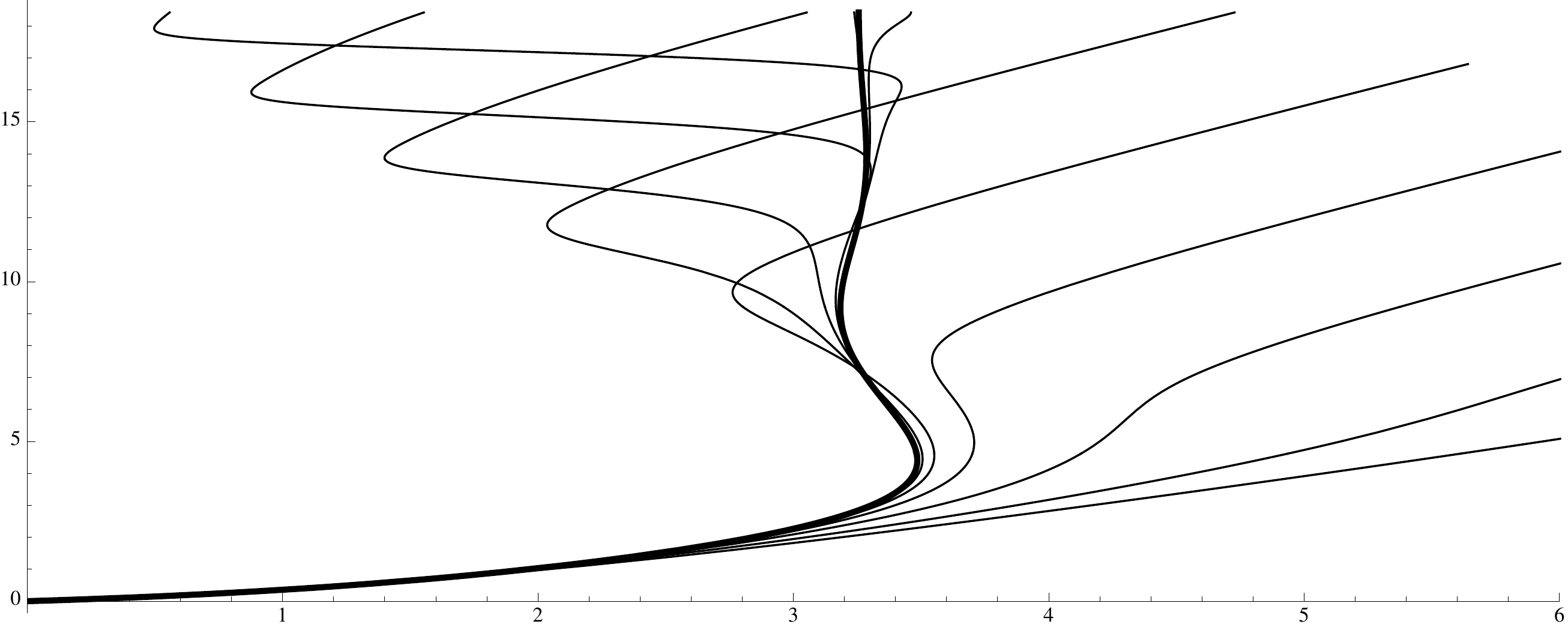}
\end{center}
\caption{\it\small Plot of $\log(1+\|\rho\|_\infty)$ in terms of $\log(1+M)$ for various values of $\eta$: $\eta=10^k$ with $k=-5$, $-4.5$,... $-0.5$,~$0$ in case of {\rm (sFD)}. The bold plain curve corresponds to the {\rm (MB)} case $\eta=0$.\label{Fig:svm}}
\end{figure}

The bifurcation diagrams show that as the parameter $\eta$ approaches zero, the solutions of simplified Fermi--Dirac (sFD) model parametrized by this number $\eta$ approach the ones for the Maxwell--Boltzmann (MB) case, corresponding to the limit value $\eta=0$, when the mass is in the admissible range for (MB). They also show that solutions with arbitrarily large masses exist as long as $\eta$ is positive.

In practice, for numerical purposes, we have chosen $\varepsilon=10^{-6}\ll p_\infty$. We may notice that our solutions come very close to $(0,0)$ for some $s<t(\varepsilon)$ but are such that $x(s)$ is negative for larger, negative values of $s$. The Gelfand problem is recovered for $\eta=0$ (see for instance \cite{DOLBEAULT:2009:HAL-00349574:2, MR0340701}), but Figure~\ref{Fig:svm} clearly shows that a solution exists for any $M>0$ as soon as $\eta$ is positive. This corresponds well to the known existence results which either by variational or topological arguments yield the solution irrespectively of any value of the mass parameter. For the detailed analysis of this issue, see \cite{MR2136979}.

\medskip Solutions to \eqref{Eqn:Fermi--Dirac} can be obtained by a fixed point method as in \cite{MR2136979}. Alternatively, they can be characterized variationally. Hence, it should be noted that to get the minimal solution of \eqref{Eqn:Fermi--Dirac}, one can minimize the \emph{free energy} functional of the solution (\emph{cf.}~\cite{MR2136979}), namely
\[
\mathcal F[\rho]:=\mathcal S[\rho]-\mathcal P[\rho]
\]
where $\mathcal S[\rho]:=\int_B\beta(\rho)\,dx$ is the \emph{generalized entropy} and $\mathcal P[\rho]:=\frac12\int_B|\nabla\phi|^2\,dx$ is the \emph{self-consistent potential energy}. The function $\beta$ is convex and such that $\beta'=H$, which means $\beta''=1/R$. Any solution to \eqref{Eqn:Fermi--Dirac} is a critical point of $\mathcal F$ under mass constraint. Equation~\eqref{Eqn:Evol} is, at least formally, the gradient flow of $\mathcal F$ with respect to Wasserstein's distance according to the ideas introduced, \emph{e.g.,} in~\cite{MR1842429} and the reader is invited to check that $\mathcal F$ is monotone non-increasing along the flow defined by~\eqref{Eqn:Evol}. This flows preserves the mass. Hence a minimizer of $\mathcal F$ under mass constraint is non-linearly dynamically stable.

\emph{Entropies} for the isothermal model and for a model with fixed energy have been exhibited for instance in \cite{biler2004parabolic}, in connection with many other papers dealing with \emph{generalized entropy} (or free energy) functionals, see for instance \cite{MR2065020, biler2004parabolic, MR1853037}. Note that for the isothermal case, the entropies in \cite{MR2143357} or \cite{biler2004parabolic} differ by the mass constant with the above one (which has no physical consequences since mass is conserved by the flow).  

In the general case, we may notice that the entropy generating function $\beta$ can be written as
\[
\beta(z)=z\,H(z)-P(z)\;,
\]
where the pressure $P$ is such that $P'(z)=z\,H'(z)$, while for the specific $R$ corresponding to (sFD) case (see Appendix~\ref{Appendix} for details) we have
\[
\beta(z)=z\,\log z-\,z+\frac9{10}\,\eta\,z^{5/3}\;.
\]
The generalized entropy $\mathcal S[\rho]$ can be computed as
\begin{multline*}
s(t):=4\,\pi\int_{-\infty}^0e^{3\,s}\,\beta\(e^{-2\,s}\,y(s)\)\;ds\\
=4\,\pi\int_{-\infty}^0e^{3\,s}\,\beta\(p(s)\)\;ds\approx4\,\pi\int_{t(\varepsilon)}^0e^{3\,s}\,\beta\(p^\varepsilon(s)\)\;ds
\end{multline*}
using \eqref{Eqn:zeta}, \eqref{Eqn:change} and \eqref{Eqn:Change2}. Similarly, the potential energy $\mathcal P[\rho]$ can be computed as
\[
p(t):=4\,\pi\int_{-\infty}^0e^s\,|x(s)|^2\,ds\approx4\,\pi\int_{t(\varepsilon)}^0e^{5\,s}\,|q^\varepsilon(s)|^2\,ds\;.
\]
\begin{figure}[hb]\begin{center}
\includegraphics[height=4cm]{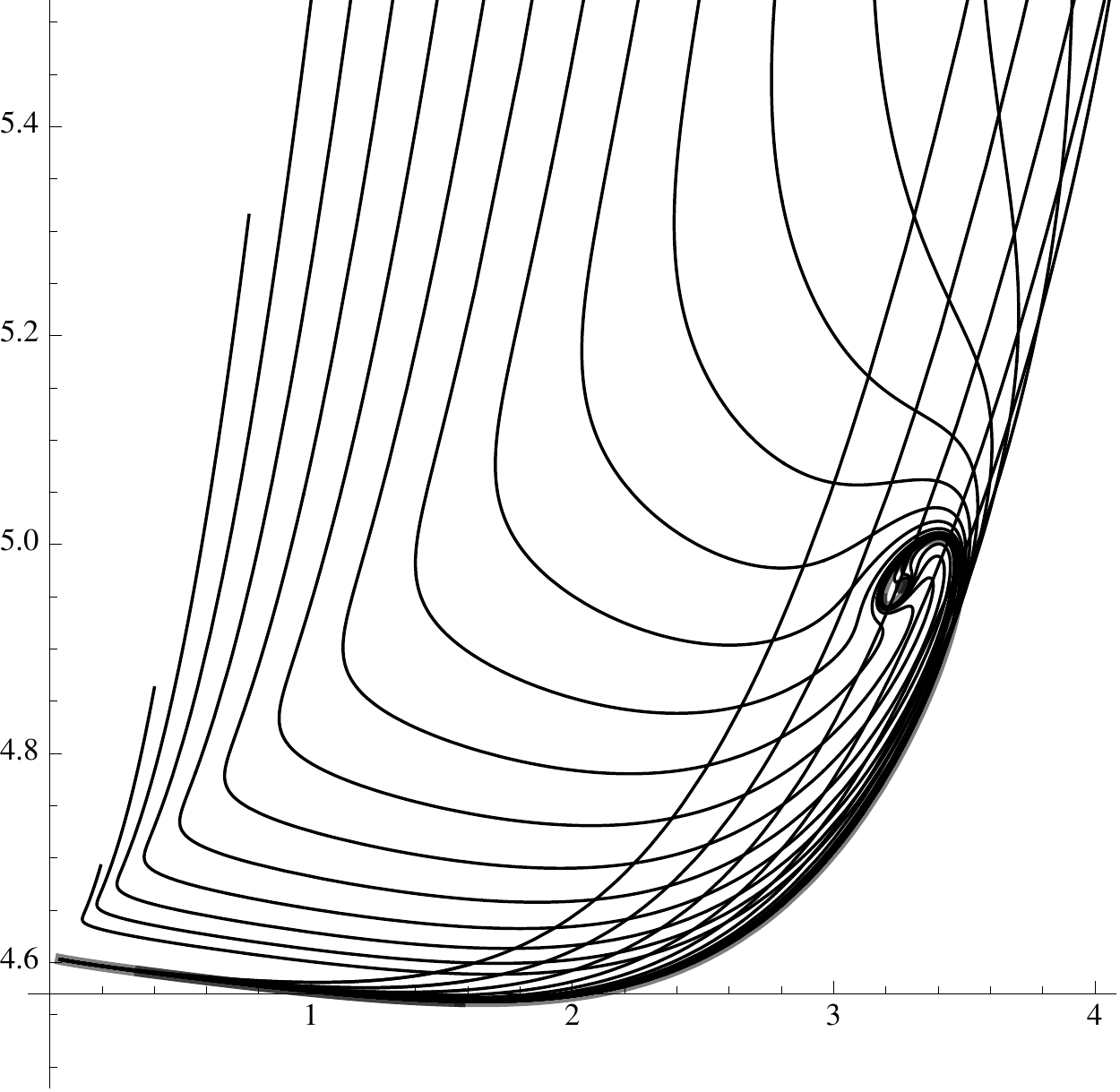} \hspace*{30pt}
\includegraphics[height=4cm]{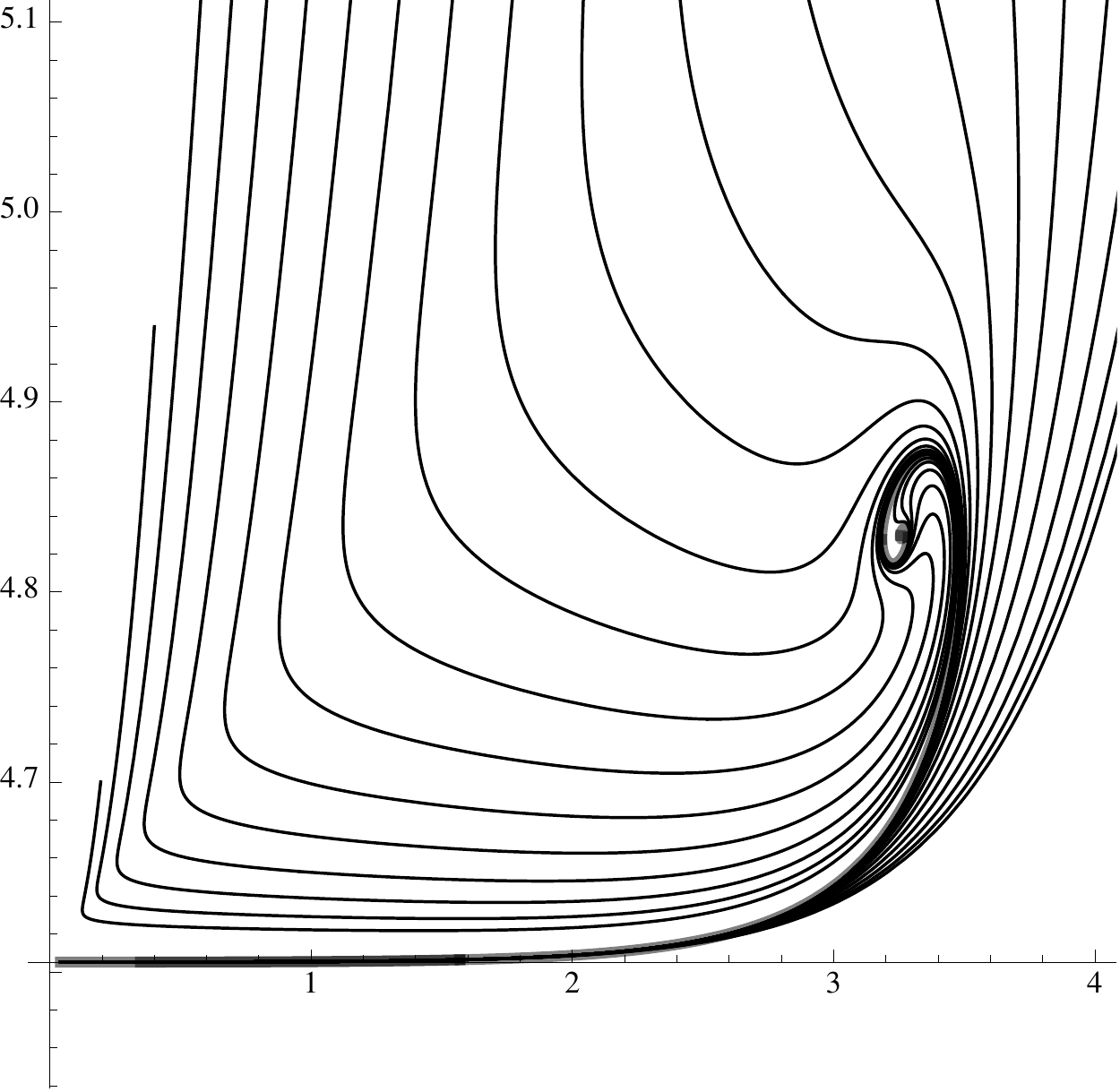}
\end{center}
\caption{\it\small Left: entropy as a function of the mass: plot of the curves $(\log(1+M),\log(100+\mathcal S[\rho]))$ for $\eta=10^k$ with $k=-5$, $-4.75$,... $-0.25$,~$0$. Right: potential energy as a function of the mass: plot of $(\log(1+M),\log(100+\mathcal P[\rho]))$ for $\eta=10^k$ with $k=-5$,$-4.75$,...,$-0.25$,$0$. In both cases, the bold curve corresponds to $\eta=0$. \label{Fig7}}
\end{figure}
Numerical results are shown in Figs.~\ref{Fig7} and~\ref{Fig8}.

\begin{figure}[ht]\begin{center}
\includegraphics[height=4cm]{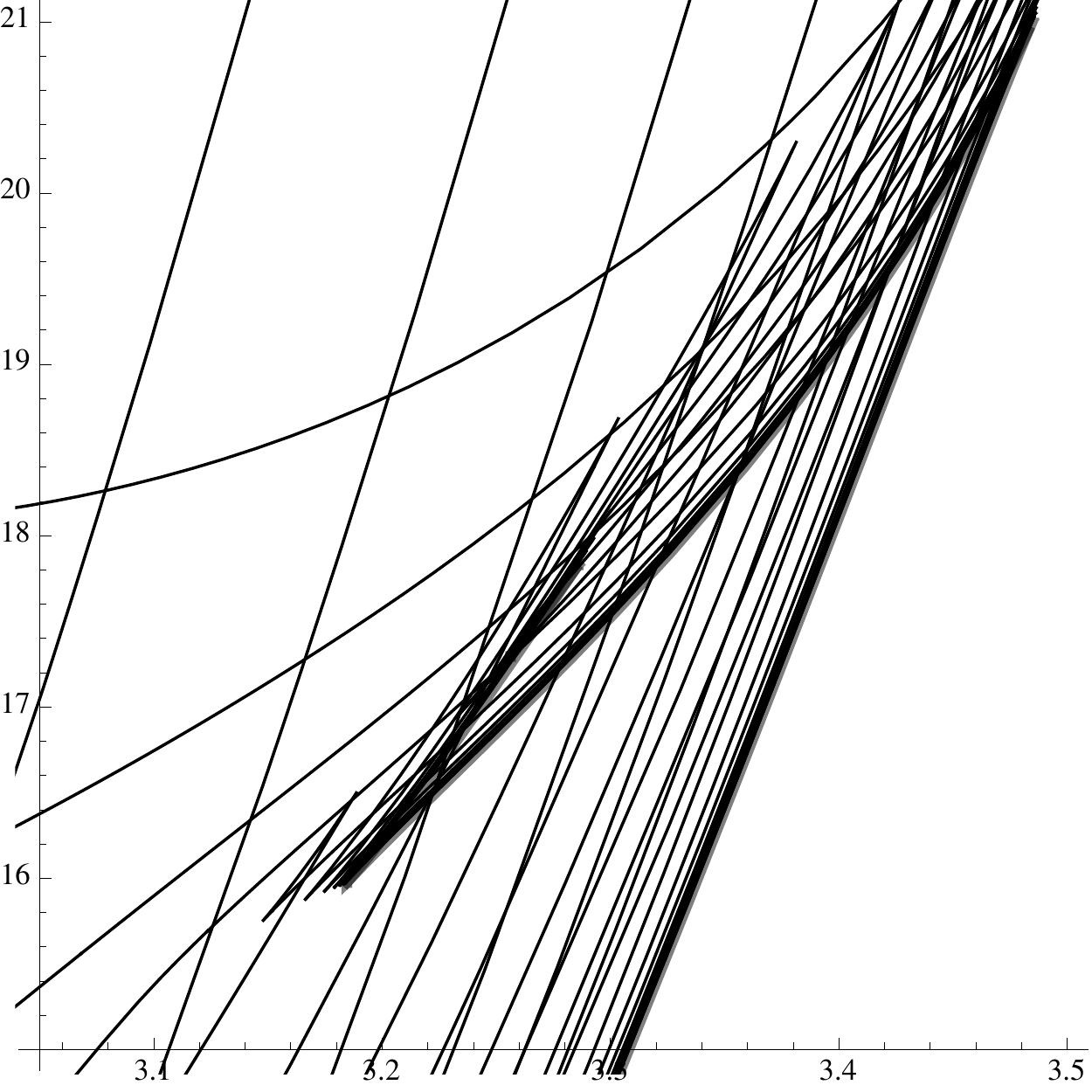} \hspace*{30pt}
\includegraphics[height=4cm]{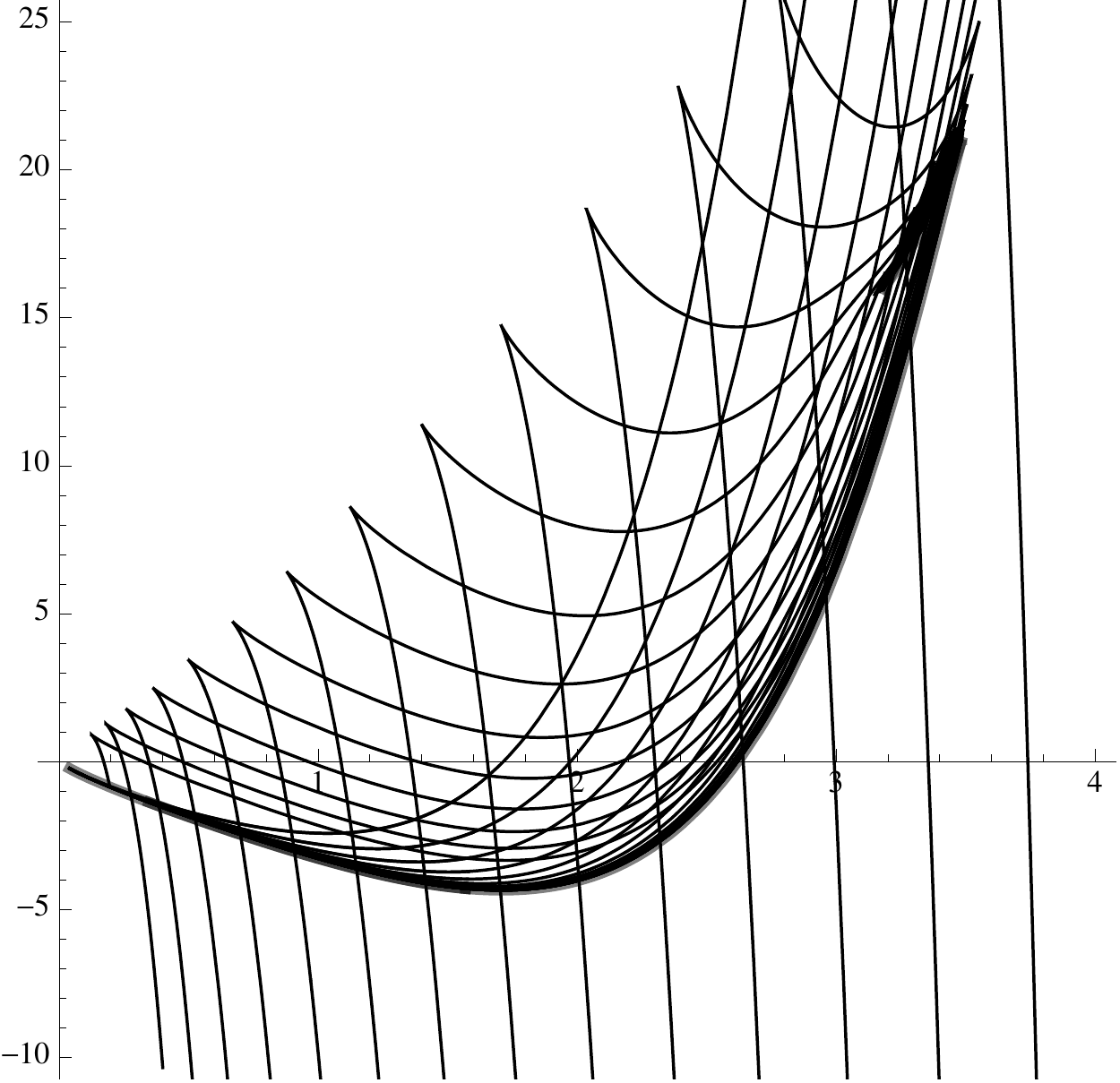}
\end{center}
\caption{\it\small The free energy as a function of the mass: plot of $(\log(1+M),\mathcal F[\rho]$ for $\eta=10^k$ with $k=-5$, $-4.75$,... $-0.25$,~$0$ (right).  The bold curve corresponds to $\eta=0$. An enlargement near the endpoint of the curve corresponding to $\eta=0$ is shown on the left.\label{Fig8}}
\end{figure}

\newpage\appendix\section{Fermi and related functions}\label{Appendix}

The Fermi function $f_\alpha$ is defined for any $\alpha>-1$ and $z\in \mathbb R$ as
\[\label{Eqn:Fermi-Function}
f_\alpha(z)=\int_0^\infty\frac{x^\alpha}{1+\exp\,(x-z)}\;dx\;.
\]
We refer to \cite{stanczy-evolution} for more details. In this paper we just use three values of~$\alpha$: $3/2$, $1/2$ and $-1/2$.
The Fermi functions enjoy the following recurrence identity
\[
f'_{\alpha+1}(z)=(\alpha+1)\,f'_\alpha(z)\quad\forall\,z\in\R
\]
and asymptotically behave like
\[\label{Eqn:FI}
f_\alpha(z)\sim\frac1{\alpha+1}\,z^{\alpha+1}\;\mbox{as}\;z\to+\infty\quad\mbox{and}\quad f_\alpha(z)\sim\Gamma(\alpha+1)\,e^z \;\mbox{as}\;z\to-\infty\;,
\]
where $\Gamma$ is the Euler Gamma function. The function $F$ in the Fermi--Dirac (FD) case is defined as
\[
F(z)=\frac\mu2\,f_{1/2}(z)\quad\forall\,z\in\R\;.
\]
{}From the relations $F=H^{-1}$, $R\,H'=1$ and $P'(z)=z\,H'(z)$, it follows that
\[\label{Eqn:RFD}
R(z)=\frac\mu4\,f_{-1/2}\( f_{1/2}^{-1}(2z/\mu)\)\quad\forall\,z\in(0,\infty)
\]
and, as a consequence of the relation $P'(z)\,R(z)=z$, we obtain
\[
P(z)=\frac\mu3\,f_{3/2}\( f_{1/2}^{-1}(2z/\mu)\)\quad\forall\,z\in(0,\infty)
\]
while the $H$ function is given by
\[\label{Eqn:H}
H(z)=f_{1/2}^{-1}(2z/\mu)\quad\forall\,z\in(0,\infty)\;.
\]
Note that to justify the above relations one can use the identity
\[
P'(z)=\frac2\mu \( f_{1/2}^{-1}\)'\(\frac{2z}{\mu}\) z\quad\forall\,z\in(0,\infty)\;.
\]

\medskip In the simplified Fermi--Dirac (sFD) model one has
\[\label{Eqn:RTM}
R(z)=\frac1{\frac1z+\frac\eta{z^{1/3}}}\quad\forall\,z\in(0,\infty)\;,
\]
which shares the same asymptotic as the function $R$ in the (FD) case. The constants $\eta$ and $\mu$ are related by \eqref{Eqn:EtaMu} so that the function $H(z)=\log z+ \frac32\,\eta\,z^{2/3}$ has the same behavior as in the case (FD) as $z\to\pm\infty$.

\medskip Finally, let us conclude with a useful estimate.
\begin{lemma}\label{Lem:Equivalents} In the (sFD) case, we have
\[
0\le\frac{z-R(z)}{z^{5/3}}\le\eta\quad\forall\,z\in(0,\infty)\;.
\]
In the (FD) case, there exists a positive constant $C$, independent of $\eta$ such that
\[
0\le\frac{z-R(z)}{z^{5/3}}\le C\,\eta\quad\forall\,z\in(0,\infty)\;.
\]
\end{lemma}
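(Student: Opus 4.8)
The plan is to treat the two cases separately, starting with the (sFD) case where the expression for $R$ is fully explicit. Here $R(z)=\big(\tfrac1z+\tfrac\eta{z^{1/3}}\big)^{-1}=\tfrac{z}{1+\eta\,z^{2/3}}$, so I would simply compute
\[
z-R(z)=z-\frac{z}{1+\eta\,z^{2/3}}=\frac{z\,\eta\,z^{2/3}}{1+\eta\,z^{2/3}}=\frac{\eta\,z^{5/3}}{1+\eta\,z^{2/3}}\;.
\]
Dividing by $z^{5/3}$ gives $\tfrac{z-R(z)}{z^{5/3}}=\tfrac\eta{1+\eta\,z^{2/3}}$. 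Since $z>0$ and $\eta>0$, the denominator is $\ge1$, so this quantity is nonnegative and bounded above by $\eta$, which is exactly the claimed bound. This case is immediate and involves no asymptotics.

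For the (FD) case the difficulty is that $R(z)=\tfrac\mu4\,f_{-1/2}\big(f_{1/2}^{-1}(2z/\mu)\big)$ is only implicitly defined through the Fermi functions, and we must produce a constant $C$ that is independent of $\eta$ (equivalently of $\mu$, since $\mu^2\eta^3=\tfrac83$). The natural strategy is to first reduce to a universal, $\mu$-free statement by rescaling. Writing $w:=f_{1/2}^{-1}(2z/\mu)$, so that $2z/\mu=f_{1/2}(w)$, the quotient becomes a function of the single variable $w$ once the $\mu$-powers are tracked: using $\eta=(\tfrac83)^{1/3}\mu^{-2/3}$ from \eqref{Eqn:EtaMu}, I would show that $\tfrac{z-R(z)}{\eta\,z^{5/3}}$ equals a fixed dimensionless expression $G(w):=c\,\dfrac{f_{1/2}(w)-f_{-1/2}(w)}{f_{1/2}(w)^{5/3}}$ built purely from the Fermi functions (with $c$ an explicit absolute constant coming from the powers of $2$ and $\tfrac83$), where the $\mu$-dependence has cancelled completely. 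Then it suffices to prove that $G$ is nonnegative and bounded on all of $\R$, and to set $C:=\sup_{w\in\R}G(w)$.

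The nonnegativity $G\ge0$ follows from the elementary monotonicity $f_{-1/2}(w)\le f_{1/2}(w)$ (equivalently $R(z)\le z$, which is invoked already in Lemma~\ref{Lem:APriori}); indeed, $x^{-1/2}\le x^{1/2}$ for $x\ge1$ while the $x<1$ contribution is controlled, so the inequality can be read off from the integral representation, or more cleanly from the recurrence $f_{1/2}'=\tfrac12 f_{-1/2}$ combined with convexity. The boundedness of $G$ is where I expect the real work: $G$ is continuous on $\R$, so one only needs to control its two ends. Using the asymptotics recorded in the Appendix, namely $f_\alpha(w)\sim\tfrac1{\alpha+1}w^{\alpha+1}$ as $w\to+\infty$ and $f_\alpha(w)\sim\Gamma(\alpha+1)\,e^w$ as $w\to-\infty$, I would check that $G$ tends to finite limits (in fact, to $0$ as $w\to-\infty$, since $f_{1/2}-f_{-1/2}$ and $f_{1/2}^{5/3}$ both decay exponentially but the numerator difference decays at the same exponential rate against a larger-power denominator). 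At $w\to+\infty$ the numerator behaves like $\tfrac23 w^{3/2}$ and the denominator like $w^{5/2}$ up to constants, so $G\to0$ there as well.

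The main obstacle is precisely this uniform boundedness at both asymptotic ends together with continuity on the compact middle: one must verify that $G$ has no singularity (the denominator $f_{1/2}(w)^{5/3}$ never vanishes since $f_{1/2}>0$ everywhere, and $f_{1/2}(w)\to0$ only as $w\to-\infty$, where the ratio must be shown to stay bounded rather than blow up). The delicate point is the $w\to-\infty$ limit, where both numerator and denominator are small: here I would expand using the sharper asymptotics $f_\alpha(w)=\Gamma(\alpha+1)\,e^w\big(1+o(1)\big)$, so that $f_{1/2}-f_{-1/2}\sim\big(\Gamma(\tfrac32)-\Gamma(\tfrac12)\big)e^w$ to leading order while $f_{1/2}^{5/3}\sim\Gamma(\tfrac32)^{5/3}e^{5w/3}$; since $e^{w}/e^{5w/3}=e^{-2w/3}\to+\infty$, one must instead use the \emph{next} order term in the numerator, where the leading exponentials cancel (because $f_{1/2}-f_{-1/2}$ is itself $o(e^w)$ or carries a compensating sign structure), to see that the ratio in fact decays. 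Pinning down this cancellation — i.e.\ obtaining $f_{1/2}(w)-f_{-1/2}(w)=O(e^{2w})$ or the appropriate refined rate as $w\to-\infty$ — is the crux and is naturally deferred to the separate asymptotic estimates on $f_\alpha$ collected in the Appendix; once it is in hand, $G$ is continuous on $\R$ with finite limits at $\pm\infty$, hence bounded, and $C:=\sup G<\infty$ completes the proof.
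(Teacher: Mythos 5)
Your (sFD) computation is exactly the paper's proof and is correct, and your overall strategy for (FD) --- rescale to cancel the $\mu$-dependence, reduce the claim to the boundedness of a universal function of one variable, and control that function by continuity plus the Fermi asymptotics at $\pm\infty$ --- is also the paper's route (the paper rescales with $w=z\,\eta^{3/2}$ and defines $C$ as the resulting supremum). However, your reduction contains an algebraic error that is fatal to the argument as written. Since $z=\frac\mu2\,f_{1/2}(w)$ and $R(z)=\frac\mu4\,f_{-1/2}(w)$ with $w=f_{1/2}^{-1}(2z/\mu)$, one gets
\[
z-R(z)=\frac\mu4\,\bigl(2\,f_{1/2}(w)-f_{-1/2}(w)\bigr)\;,
\]
so the dimensionless function to bound has numerator $2\,f_{1/2}(w)-f_{-1/2}(w)$, not $f_{1/2}(w)-f_{-1/2}(w)$; the relative factor $2$ between the two Fermi functions cannot be absorbed into your overall constant $c$. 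This factor is precisely what makes the proof work: since $\Gamma(1/2)=2\,\Gamma(3/2)$, the leading terms of $2f_{1/2}$ and $f_{-1/2}$ (both $\sim\Gamma(1/2)\,e^w$) cancel exactly as $w\to-\infty$, leaving $2\,f_{1/2}(w)-f_{-1/2}(w)=O(e^{2w})$, and then $e^{2w}/e^{5w/3}=e^{w/3}\to0$, so the ratio stays bounded (and tends to $0$). With your numerator there is no cancellation: $f_{1/2}(w)-f_{-1/2}(w)\sim\bigl(\Gamma(3/2)-\Gamma(1/2)\bigr)e^w=-\tfrac{\sqrt\pi}2\,e^w$, so your $G(w)$ behaves like $-\,c'\,e^{-2w/3}\to-\infty$ as $w\to-\infty$. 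In particular your assertion that ``$f_{1/2}-f_{-1/2}$ is itself $o(e^w)$'' is false, and your $G$ is neither nonnegative nor bounded, nor equal to $(z-R(z))/(\eta\,z^{5/3})$.

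The same error infects your nonnegativity step: the inequality $f_{-1/2}(w)\le f_{1/2}(w)$ you invoke is false for $w$ sufficiently negative (as $w\to-\infty$, $f_{-1/2}\sim\sqrt\pi\,e^w$ while $f_{1/2}\sim\tfrac{\sqrt\pi}2\,e^w$), so neither the integral-comparison sketch nor the convexity remark can establish it. What is true, and what the paper proves, is $f_{-1/2}\le 2\,f_{1/2}$, which is exactly equivalent to $R(z)\le z$. The paper's proof is a one-line integration by parts,
\[
f_{1/2}(z)\ \ge\ \int_0^\infty\sqrt x\,\frac{e^{x-z}}{(1+e^{x-z})^2}\,dx\ =\ \frac12\,f_{-1/2}(z)\;,
\]
equivalently, via $f_{1/2}'=\tfrac12\,f_{-1/2}$, the inequality $f_{1/2}'\le f_{1/2}$. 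If you replace your numerator by $2\,f_{1/2}-f_{-1/2}$ throughout (and prove nonnegativity as above), your argument becomes correct and coincides with the paper's; as written, both crux steps --- nonnegativity and boundedness at the $w\to-\infty$ end --- fail.
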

\begin{proof} A straightforward computation shows that we have $z-R(z)=\frac{\eta\,z^{5/3}}{1+\eta\,z^{2/3}}$ in the (sFD) case. The upper bound in the (FD) case follows by considering equivalents as $z\to\infty$ with $w=z\,\eta^{3/2}$ and
\[
C=\sup_{w\in [0,\infty)}\frac1{w^{5/3}}\,\left[
w-\frac1{\sqrt6}\,f_{-1/2}\(f_{1/2}\(\tfrac12\,\sqrt6\,w\)\)\right]\,.
\]
As for the lower bound, we observe that
\begin{eqnarray*}
f_{1/2}(z)&=&\int_0^\infty\frac{\sqrt x}{1+\exp\,(x-z)}\;dx=\int_0^\infty\sqrt x\,\frac{1+\exp\,(x-z)}{(1+\exp\,(x-z))^2}\;dx\\
&\ge&\int_0^\infty\sqrt x\,\frac{\exp\,(x-z)}{(1+\exp\,(x-z))^2}\;dx\\
&&\quad=\left[-\,\frac{\sqrt x}{1+\exp\,(x-z)}\right]_0^\infty+\frac12\int_0^\infty\frac{x^{-1/2}}{1+\exp\,(x-z)}\;dx=\frac 12\,f_{-1/2}(z)
\end{eqnarray*}
where the last line follows from an integration by parts. Coming back to the expression of $R$, $R(z)\le z$ is equivalent to $f_{-1/2}(t)\le2\,f_{1/2}(t)$ with $t=f_{1/2}^{-1}(2z/\mu)$, which is precisely what we have just shown.\end{proof}

\bigskip\noindent{\footnotesize\copyright~2013 by the authors. This paper may be reproduced, in its entirety, for non-commercial purposes.}



\begin{thebibliography}{10}

\bibitem{MR2065020}
{\sc A.~Arnold, J.~A. Carrillo, L.~Desvillettes, J.~Dolbeault, A.~J{\"u}ngel,
  C.~Lederman, P.~A. Markowich, G.~Toscani, and C.~Villani}, {\em Entropies and
  equilibria of many-particle systems: an essay on recent research}, Monatsh.
  Math., 142 (2004), pp.~35--43.

\bibitem{BDEMN}
{\sc P.~Biler, J.~Dolbeault, M.~Esteban, T.~Nadzieja, and P.~Markowich}, {\em
  Steady states for {S}treater's energy-transport models of self-gravitating
  particles.}, in Transport in Transition Regimes, vol.~135 of IMA Volumes in
  Mathematics and Its Applications, Springer, Warsaw, 2004, pp.~37--56.

\bibitem{MR1305221}
{\sc P.~Biler, D.~Hilhorst, and T.~Nadzieja}, {\em Existence and nonexistence
  of solutions for a model of gravitational interaction of particles. {II}},
  Colloq. Math., 67 (1994), pp.~297--308.

\bibitem{MR2099972}
{\sc P.~Biler, P.~Lauren{\c{c}}ot, and T.~Nadzieja}, {\em On an evolution
  system describing self-gravitating {F}ermi-{D}irac particles}, Adv.
  Differential Equations, 9 (2004), pp.~563--586.

\bibitem{MR2143357}
{\sc P.~Biler, T.~Nadzieja, and R.~Sta{\'n}czy}, {\em Nonisothermal systems of
  self-attracting {F}ermi-{D}irac particles}, in Nonlocal elliptic and
  parabolic problems, vol.~66 of Banach Center Publ., Polish Acad. Sci.,
  Warsaw, 2004, pp.~61--78.

\bibitem{biler2004parabolic}
{\sc P.~Biler and R.~Sta{\'n}czy}, {\em {Parabolic-elliptic systems with
  general density-pressure relations}}, S\=urikaisekikenky\=usho K\=oky\=uroku,
  1405 (2004), pp.~31--53.

\bibitem{MR2675439}
\leavevmode\vrule height 2pt depth -1.6pt width 23pt, {\em Mean field models
  for self-gravitating particles}, Folia Math., 13 (2006), pp.~3--19.

\bibitem{MR2305349}
\leavevmode\vrule height 2pt depth -1.6pt width 23pt, {\em Nonlinear diffusion
  models for self-gravitating particles}, in Free boundary problems, vol.~154
  of Internat. Ser. Numer. Math., Birkh\"auser, Basel, 2007, pp.~107--116.

\bibitem{MR1853037}
{\sc J.~A. Carrillo, A.~J{\"u}ngel, P.~A. Markowich, G.~Toscani, and
  A.~Unterreiter}, {\em Entropy dissipation methods for degenerate parabolic
  problems and generalized {S}obolev inequalities}, Monatsh. Math., 133 (2001),
  pp.~1--82.

\bibitem{C}
{\sc P.-H. Chavanis}, {\em Phase transitions in self-gravitating systems},
  International Journal of Modern Physics B, 20 (2006), pp.~3113--3198.

\bibitem{MR2092680}
{\sc P.-H. Chavanis, P.~Lauren{\c{c}}ot, and M.~Lemou}, {\em Chapman-{E}nskog
  derivation of the generalized {S}moluchowski equation}, Phys. A, 341 (2004),
  pp.~145--164.

\bibitem{CSR}
{\sc P.-H. Chavanis, J.~Sommeria, and R.~Robert}, {\em Statistical mechanics of
  two-dimensional vortices and collisionless stellar systems}, Astrophys. J.,
  471 (1996), p.~385.

\bibitem{MR2338354}
{\sc J.~Dolbeault, P.~Markowich, D.~Oelz, and C.~Schmeiser}, {\em Non linear
  diffusions as limit of kinetic equations with relaxation collision kernels},
  Arch. Ration. Mech. Anal., 186 (2007), pp.~133--158.

\bibitem{DOLBEAULT:2009:HAL-00349574:2}
{\sc J.~{D}olbeault and R.~{S}ta\'nczy}, {\em {N}on-existence and uniqueness
  results for supercritical semilinear elliptic equations}, {A}nnales {H}enri
  {P}oincar{\'e}, 10 (2009), pp.~1311--1333.

\bibitem{MR544879}
{\sc B.~Gidas, W.~M. Ni, and L.~Nirenberg}, {\em Symmetry and related
  properties via the maximum principle}, Comm. Math. Phys., 68 (1979),
  pp.~209--243.

\bibitem{MR0340701}
{\sc D.~D. Joseph and T.~S. Lundgren}, {\em Quasilinear {D}irichlet problems
  driven by positive sources}, Arch. Rational Mech. Anal., 49 (1972/73),
  pp.~241--269.

\bibitem{MR1842429}
{\sc F.~Otto}, {\em The geometry of dissipative evolution equations: the porous
  medium equation}, Comm. Partial Differential Equations, 26 (2001),
  pp.~101--174.

\bibitem{MR2136979}
{\sc R.~Sta{\'n}czy}, {\em Steady states for a system describing
  self-gravitating {F}ermi-{D}irac particles}, Differential Integral Equations,
  18 (2005), pp.~567--582.

\bibitem{MR2295189}
\leavevmode\vrule height 2pt depth -1.6pt width 23pt, {\em On some
  parabolic-elliptic system with self-similar pressure term}, in Self-similar
  solutions of nonlinear {PDE}, vol.~74 of Banach Center Publ., Polish Acad.
  Sci., Warsaw, 2006, pp.~205--215.

\bibitem{Stanczy07}
\leavevmode\vrule height 2pt depth -1.6pt width 23pt, {\em Reaction-diffusion
  equations with nonlocal term}, in Equadiff 2007, Wien, 2007.

\bibitem{MR2548877}
\leavevmode\vrule height 2pt depth -1.6pt width 23pt, {\em Stationary solutions
  of the generalized {S}moluchowski-{P}oisson equation}, in Parabolic and
  {N}avier-{S}tokes equations. {P}art 2, vol.~81 of Banach Center Publ., Polish
  Acad. Sci. Inst. Math., Warsaw, 2008, pp.~493--500.

\bibitem{MR2506790}
\leavevmode\vrule height 2pt depth -1.6pt width 23pt, {\em The existence of
  equilibria of many-particle systems}, Proc. Roy. Soc. Edinburgh Sect. A, 139
  (2009), pp.~623--631.

\bibitem{stanczy-evolution}
\leavevmode\vrule height 2pt depth -1.6pt width 23pt, {\em On an evolution
  system describing self-gravitating particles in microcanonical setting},
  Monatshefte f\"ur Mathematik, 162 (2011), pp.~197--224.

\bibitem{Wol}
{\sc G.~Wolansky}, {\em Critical behaviour of semi-linear elliptic equations
  with sub-critical exponents}, Nonlinear Analysis, 26 (1996), pp.~971--995.

\end{thebibliography}
\end{document}